\newcommand{\wdt}{\widetilde}
\newtheorem{thm}{Theorem}[section]
\newtheorem{lem}[thm]{Lemma}
\newtheorem{rem}[thm]{Remark}
\newtheorem{exm}[thm]{Example}
\newcommand{\eps}{\varepsilon}
\newcommand{\p}{\mathfrak{p}}
\newcommand{\M}{\mathcal{M}}
\newcommand{\m}{\mathfrak{m}}
\newcommand{\F}{\mathcal{F}}
\newcommand{\E}{\mathbb{E}}
\newcommand{\I}{\mathbf{I}}
\newcommand{\N}{\mathbb{Z_+}}
\newcommand{\PP}{\mathbb{P}}
\newcommand{\R}{\mathbb{R}}
\newcommand{\Lom}{\mathcal{L}}
\numberwithin{equation}{section}
\newcommand{\1}{\boldsymbol{1}}
\newcommand{\tr}{{\rm tr}}
\newcommand{\al}{\alpha}
\newcommand{\bed}{\begin{displaymath}}
\newcommand{\eed}{\end{displaymath}}
\newcommand{\bea}{\bed\begin{array}{rl}}
\newcommand{\eea}{\end{array}\eed}
\newcommand{\ad}{&\!\!\!\disp}
\newcommand{\aad}{&\disp}
\newcommand{\barray}{\begin{array}{ll}}
\newcommand{\earray}{\end{array}}
\newcommand{\diag}{{\rm diag}}
\def\disp{\displaystyle}
\newcommand{\rr}{{\Bbb R}}
\newcommand{\mz}{{m_0}}
\def\a.s{\text{\;a.s.\;}}
\newcommand{\nd}{\noindent}
\begin{document}

\title{Certain Properties Related to Well Posedness of Switching Diffusions}
\author{Dang Hai Nguyen,\thanks{Department of Mathematics, Wayne State University, Detroit, MI
48202,
dangnh.maths@gmail.com. This research  was supported in part
by the National Science Foundation under grant  DMS-1207667.}
\and George Yin,\thanks{Department of
Mathematics, Wayne State University, Detroit, MI 48202,
gyin@math.wayne.edu. This research  was supported in part
by the National Science Foundation under grant  DMS-1207667.}
\and Chao Zhu\thanks{Department of Mathematical
Sciences, University of Wisconsin-Milwaukee, Milwaukee,
WI 53201, zhu@uwm.edu.} }
\maketitle

\begin{abstract}
This work is devoted to switching diffusions  that
have two components (a continuous component and a discrete component).
Different from the so-called Markovian switching diffusions, in the setup,
the discrete component (the switching) depends on the continuous component (the diffusion process).
The  objective of this paper is to provide a number of properties related to the well posedness.
First, the differentiability with respect to initial data of the continuous component is established.
Then, further properties including uniform continuity with respect to
initial data, and smoothness of certain functionals are obtained. Moreover,
Feller property is obtained under only local Lipschitz continuity.
Finally, an example of Lotka-Voterra model under regime switching is provided
as an illustration.

\medskip
\nd {\bf Keywords.} Switching diffusion, continuous-state dependent switching, smoothness, Feller property.

\medskip
\nd{\bf Mathematics Subject Classification.} 60J05, 60J60.

\medskip
\nd {\bf Short Title.}  Continuity and Smoothness of Switching Diffusions
\end{abstract}

\newpage
\section{Introduction}
In the past two decades, a considerable research effort has been devoted to the study of switching diffusion processes that are also called hybrid switching diffusions.
Much of the interest stems from pressing need of treating complex systems involving both continuous dynamics representable by using
solutions of stochastic differential equations, and discrete events that cannot be written as solutions of the usual differential equations.
Such hybrid systems are prevalent in a wide range of applications including
ecological and biological modeling \cite{ZY09},
control systems and filtering \cite{Zhang98},
economics and finance \cite{Liu14,Zhang01},
networked systems \cite{KGM14}, among others.
These switching diffusions can be represented by a two-component process
$(X(t),\al(t))$, where $X(t)$ is a continuous component
(also called  ``continuous state'') taking values in $\rr^r$ and $\al(t)$ is a
discrete component
(also called
``discrete state'') taking values in a finite set $\M=\{1,\dots, m_0\}$.
The interactions of the continuous and discrete components make the models more versatile and suitable for a wide range of applications. On the other hand, these interactions make the analysis of such processes much more difficult. It is interesting to note that such processes are similar to the usual diffusions, but they could behave much differently from the usual diffusion processes qualitatively.
For example, suppose that we have two linear diffusions together with a continuous-time Markov chain. The Markov chain serves as a modulating force making the process switch back and forth between these two diffusions.
Depending on the switching frequency,
even if each of the diffusion is stable, the switched system can be unstable or vice  versa; see for example, \cite[pp. 229-233]{YinZ13}.
Likewise, we may have both of the individual diffusions
being recurrent, but the switching diffusion is not.
In a way, the switching diffusion processes display many peculiar properties.

Because of their importance,  hybrid systems in general and switching diffusions in particular have drawn resurgent attentions. For some of the recent progress, we mention the work \cite{BSY16,HMS,Shao-2,Xi08,Xi09,YM,ZY} and references therein. Systematic treatments and  comprehensive study of
these stochastic processes
  can be found
  in  \cite{MaoY} and \cite{YinZ}.
  Both of these references consider systems that have pure jump random switching in addition to the noise processes driven by Brownian motions;
the first reference mentioned above concentrates on switching diffusions in which the switching mechanism is given
 by a continuous-time Markov chain independent of the Brownian motion, whereas the second reference focuses on the switching processes depending  on the current state of the diffusions. In what follows, to distinguish these two types of switching diffusions,  the first type process
 is  referred to as
 Markovian switching diffusions (or Markov chain modulated switching diffusions), and
the second  type process is called  continuous-state-dependent switching diffusions.

This paper aims to study continuous and smooth dependence on the initial data of solutions to stochastic differential equations corresponding to  continuous-state-dependent switching diffusions.  These properties are all related to the well-posedness  in certain sense. In the book of Applebaum \cite{APPLEBAUM}, similar properties for stochastic differential equations, are also referred to as  flow properties.
These properties vividly highlight  the distinctions  between  Markovian switching diffusions and  continuous-state-dependent switching diffusions.
For example, it is well-known
that  a diffusion process is smooth in the $L^2$ sense with respect to its initial data under suitable conditions; see   \cite[VII, Section 4]{GS}.
Such a smoothness property readily carries over to the case of Markovian switching diffusions because  the Markov chain is independent of the Brownian motion and the initial state of the continuous component does not influence the dynamics of the Markov chain.
In this paper, we show that
 this phenomenon becomes markedly different
 in the case of
   continuous-state-dependent switching diffusions, which is in sharp
contrast  with that of   Markovian switching diffusions and diffusions.
In lieu of $L^2$ convergence, we demonstrate that
 the smoothness with respect to initial data is in the sense of $L^p$ for any $0<p$ strictly less than $\lambda$, with $\lambda\le 1$ being the H\"older exponent of the generator for the switching process; see Theorem \ref{thm-1} for the precise statement.
  Moreover, we provide a counterexample   to demonstrate that the estimates above is in fact sharp; we cannot expect $L^1$ convergence, neither can we get $L^2$ convergence in the current case.
 Next, we examine uniform estimates of two solutions if their initial data are close. Furthermore, we establish smoothness of certain functional of switching diffusions. As an application of the uniform estimates, we revisit the issue of Feller property.  Although Feller properties for continuous-state-dependent switching diffusions have been established in  the literature, see, for example, \cite{Xi08,Xi09} as well as Chapter 2 in \cite{YinZ}, this work aims to relax the commonly used global Lipschitz condition and to provide a simple proof.

The rest of the paper is arranged as follows. Section \ref{sec:for} presents the switching diffusion setup. Section \ref{sec:diff} focuses on the differentiability of the switching diffusions with respect to the initial data
of the continuous state; also presented in this section is a counter-example of the differentiability under $L^1$ convergence.
Section \ref{sec:fur} studies further properties, in which Section
\ref{sec:con} furthers our investigation on uniform estimates on a finite time interval of solutions with different initial data
 and Section \ref{sec:fun} investigates smoothness of a functional of the switching diffusions. Using results obtained in Sections \ref{sec:diff}, in Section \ref{sec:fel}, we first provide an alternative proof of the Feller property under global Lipschitz condition. Then we obtain the Feller property by using only local Lipschitz continuity. Finally, we close the paper with an example in Section \ref{sec:lot}, a competitive regime-switching Lotka-Volterra model, which illustrates our results.

\section{Switching Diffusions}\label{sec:for}
Let $(\Omega,\F,\F_t,\PP)$ be a filtered probability space, where $\Omega$ is the sample space, $\F$ is the $\sigma$-algebra of subsets, $\{\F_t\}$
is a  filtration  (that is, $\{\F_t\}$ is a family of $\sigma$-algebras  satisfying $\F_s \subset \F_t$ for $s\le t$), and $\PP$ is a probability measure.
We assume that $\F_t$ is complete in that it contains all null sets, and that $\F_t$ satisfies the usual condition in that $\F_0$ is complete and $\{\F_t\}$ is right continuous.
Let $\M=\{1,\dots,m_0\}$ be a finite set, and
suppose that $b(\cdot,\cdot): \R^r\times\M\to\R^r$ and $\sigma(\cdot,\cdot): \R^r\times\M\to\R^{r\times d}$.
In this paper, we
consider  a switching diffusion process, a two-component Markov process $(X(t),\alpha(t))$ whose generator is given by
\begin{equation}\label{oper-sd}\barray \disp
{\cal L}f(x,i )\ad =
\nabla f'(x,i ) b(x,i )
 + \tr (\nabla^2 f(x,i ) A(x,i )) + Q(x)
f(x,\cdot)(i )\\
\ad =
\sum^r_{k=1}b_k(x,i )
\frac{\partial f(x,i )}{\partial x_k}
 +\frac{1}{2}
 \sum^r_{k,l=1}a_{kl}(x,i )
\frac{\partial^2 f(x,i )}{\partial x_k \partial x_l}\\
\aad \ + Q(x)
f(x,\cdot)(i ), \ \hbox{ for any } \ (x, i ) \in  \rr^r \times \M,\earray
\end{equation}
for a $C^2$-function $f(\cdot, i )$ (whose derivatives with respect to $x$ up to the second order are continuous)
for each $i  \in \M$,
where
$\nabla f(x,i )$ and $\nabla^2 f(x,i )$
denote the gradient
and Hessian of $f(x,i )$ with
respect to $x$, respectively, and
\bea\ad Q(x) f(x,\cdot) (i ) =\sum^\mz_{j=1} q_{i  j}(x) f(x,j),
\ \hbox{ and } \\
\ad A(x,i)=(a_{kl}(x,i
))=\sigma(x,i)\sigma'(x,i)\in \rr^{r\times r}.\eea
The dynamics and transition rules of $(X,\al)$ may also be presented as follows.
Suppose that
$w(t)$ is an
$\R^d$-valued Brownian motion,
 $\alpha(t)$ is a pure jump process taking value in  $\M$,  and $X(t)$ satisfies
\begin{equation}\label{sde}
dX(t)=b(X(t), \alpha(t))dt+\sigma(X(t),\alpha(t))dw(t),\end{equation}
such that
 the jump intensity of $\alpha(t)$ depends on the current state of $X(t)$ in that
 the generator of $\alpha(t)$ is given by $Q(x)=(q_{ij}(x))$
 with $q_{ij}(\cdot):\R^r\to\R$ for $i,j\in\M$,  $q_{ij}(x)\ge 0$ for $i\not =j$, and
$\sum_j q_{ij}(x)=0$ for each $i\in \M$, satisfying
\begin{equation}\label{tran}
\begin{array}{ll}
&\disp \PP\{\alpha(t+\Delta)=j|\alpha(t)=i, X(s),\alpha(s), s\leq t\}=q_{ij}(X(t))\Delta+o(\Delta) \text{ if } i\ne j
.\end{array}\end{equation}
Alternatively,
the evolution of the discrete component $\al(\cdot)$
can be represented by a stochastic integral with respect to a Poisson random
measure (see, e.g., \cite{Skorohod-89}).
For
$x\in \rr^r$ and $i
\in \M$, let $\Delta_{ij}(x), j\in\M\setminus\{i\}$ be
disjoint intervals of
the real line, each having length $q_{ij}(x)$. 
Suppose $|q_{ij}(x)|<M$ for any $i\ne j$,
then we can choose $$\Delta_{ij}(x)=\Big[((i-1)m_0+j)M, ((i-1)m_0+j)M+q_{ij}(x)\Big).$$
Define a function
$h: \rr^r \times \M \times \rr \mapsto \rr$ by
\begin{equation}\label{h-def} h(x,i,z)=\sum^\mz_{j=1} (j-i)
I_{\{z\in\Delta_{ij}(x)\}}.
\end{equation}
That is, with the partition $\{\Delta_{ij}(x): i,j\in\M\}$
used and for each $i\in
\M$,
if $z\in\Delta_{ij}(x)$, then $h(x,i,z)=j-i$; otherwise $h(x,i,z)=0$.
Then the dynamics of
the discrete component $\al(\cdot)$ can be represented by
\begin{equation}\label{eq:ju} d\al(t) = \int_\rr h(X(t),\al(t-),z) {\mathfrak p}(dt,dz),\end{equation}
where ${\mathfrak p}(dt,dz)$ is a Poisson random measure with intensity $dt
\times \m(dz)$, and $\m$ is the Lebesgue measure on $\rr$. The Poisson random
measure ${\mathfrak p}(\cdot, \cdot)$ is independent of the Brownian motion
$w(\cdot)$.
Thus, the switching process  $(X,\al)$  can be presented by the system of stochastic differential equations  given by \eqref{sde} and \eqref{eq:ju}. Let the initial condition of the switching diffusion be $(X(0),\al(0)) =(x, \al) \in \R^{r}\times \M$.
To ensure the existence and uniqueness of solutions to \eqref{sde} and \eqref{tran},
we have the following theorem, which is proved in \cite[Theorem 2.7]{YinZ}.

\begin{thm}\label{exist-unique}
Assume that $b(x,i)$ and $\sigma(x,i)$ are locally Lipschitz  in $x$ for each $i\in\M$, and  that $Q(x)=(q_{ij}(x))$ is bounded and continuous.
Assume further that there exists a function
$V(\cdot,\cdot):\R^r\times\M\mapsto\R_+$ that is twice continuously differentiable with respect
to $x\in\R^r$ for each $i\in\M$ and a constant $K>0$ such that
$$
\begin{cases}
{\cal L}V(x,i)\leq K V(x,i)\,\text{ for all }\, (x,i)\in\R^r\times\M\\
V_R:=\inf\{V(x,i): |x|\geq R, i\in \M\} \to \infty\,\text{ as }\, R\to\infty
\end{cases}
$$
Then  the system given by \eqref{sde} and \eqref{tran} has
a unique  strong solution for each initial condition.
\end{thm}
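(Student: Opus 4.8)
The plan is to combine the classical truncation argument for local existence and uniqueness with a Lyapunov (Khasminskii-type) supermartingale estimate to rule out explosion. First I would reduce to globally Lipschitz data: for each $n\in\N$, set $b_n(x,i)=b(x,i)$ and $\sigma_n(x,i)=\sigma(x,i)$ for $|x|\le n$ and extend them to be globally Lipschitz on all of $\rr^r$ (for instance, by freezing the coefficients outside the ball). Since $Q(x)$ is bounded, the switching intensity is uniformly bounded, so the intervals $\Delta_{ij}(x)$ in \eqref{h-def} lie in a fixed bounded set and the effective jump rate driving \eqref{eq:ju} is bounded; hence on any finite horizon $\al(\cdot)$ makes only finitely many jumps almost surely, with no accumulation of switches. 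For the truncated system one may then build a unique strong solution $(X^{(n)},\al^{(n)})$ on $[0,\infty)$ by interlacing: between successive jump times generated by the Poisson random measure $\mathfrak p$, the continuous part $X^{(n)}$ solves a standard It\^o SDE with fixed mode and globally Lipschitz coefficients, which is uniquely solvable, and each jump of $\al^{(n)}$ is prescribed through \eqref{h-def}.

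Second, I would patch these solutions together. Define the exit times
$$\beta_n=\inf\{t\ge0:|X^{(n)}(t)|\ge n\},\qquad \beta_\infty=\lim_{n\to\infty}\beta_n.$$
By pathwise uniqueness, and because $b_n,\sigma_n$ agree with $b,\sigma$ on $\{|x|\le n\}$, the truncated solutions are consistent: $(X^{(n)},\al^{(n)})$ and $(X^{(m)},\al^{(m)})$ coincide on $[0,\beta_n]$ whenever $m\ge n$. They therefore define a single solution $(X,\al)$ on the stochastic interval $[0,\beta_\infty)$, and the whole problem reduces to showing $\beta_\infty=\infty$ almost surely.

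Third, the heart of the argument is the non-explosion estimate via $V$. Applying the generalized It\^o formula for the two-component process to $e^{-Kt}V(X(t),\al(t))$ yields
$$e^{-K(t\wedge\beta_n)}V(X(t\wedge\beta_n),\al(t\wedge\beta_n))=V(x,\al)+\int_0^{t\wedge\beta_n}e^{-Ks}\big[{\cal L}V-KV\big](X(s),\al(s))\,ds+M(t\wedge\beta_n),$$
where $M$ collects the Brownian stochastic integral together with the compensated jump integral and, after stopping at $\beta_n$, is a true martingale. The hypothesis ${\cal L}V\le KV$ makes the drift integrand nonpositive, so taking expectations gives
$$\E\big[e^{-K(t\wedge\beta_n)}V(X(t\wedge\beta_n),\al(t\wedge\beta_n))\big]\le V(x,\al).$$
On the event $\{\beta_n\le t\}$ one has $|X(\beta_n)|\ge n$, hence $V(X(\beta_n),\al(\beta_n))\ge V_n$, and therefore
$$e^{-Kt}V_n\,\PP\{\beta_n\le t\}\le V(x,\al),\qquad\text{so}\qquad \PP\{\beta_n\le t\}\le \frac{e^{Kt}V(x,\al)}{V_n}.$$
Since $V_n\to\infty$, letting $n\to\infty$ forces $\PP\{\beta_\infty\le t\}=0$ for every $t$, whence $\beta_\infty=\infty$ almost surely. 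Thus $(X,\al)$ is a global strong solution, and uniqueness is inherited from the pathwise uniqueness of each truncated system.

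The step I expect to be most delicate is the rigorous justification of the generalized It\^o formula and the martingale property of $M$ for the coupled diffusion–switching process, precisely because the switching intensities depend on $X$. One must verify that the compensated integral against $\mathfrak p$ is a genuine martingale up to $\beta_n$, which I would handle using the boundedness of $Q$ together with the local boundedness of $V$ and its first and second derivatives along the stopped trajectories on $\{|x|\le n\}$.
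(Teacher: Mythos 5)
Your proof is correct and follows the standard route: the paper itself does not prove this theorem but defers to \cite[Theorem 2.7]{YinZ}, and the argument given there is precisely your combination of coefficient truncation with interlacing between Poisson jump times (valid since bounded $Q$ precludes accumulation of switches) and the Khasminskii-type supermartingale estimate on $e^{-Kt}V$ to kill explosion. No gaps; the delicate point you flag (the martingale property of the compensated jump integral up to $\beta_n$) is handled exactly as you propose, via boundedness of $Q$ and local boundedness of $V$ and its derivatives on the stopped paths.
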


Note that for Markovian switching diffusions, the switching process is a
 homogeneous continuous-time Markov chain with a constant matrix as its generator, and
 the Markov chain and the Brownian motion are independent.
 In lieu of such a structure, we are dealing with a much more complex system. Starting from  the next section, we focus on deriving certain smoothness and continuity properties of the underlying processes.

\section{Differentiability with Respect to Initial Data}\label{sec:diff}
This section is devoted to differentiability with respect to initial data of the switching processes, or equivalently, the smoothness of the solutions of the switched stochastic differential equations with respect to the initial data of the continuous component.
This section is divided into two parts. The first part derives the main result. The second part presents an counter example indicating our result is sharp.

\subsection{Differentiability}
In \cite[Theorem 4.2]{YZ}, we stated that $X(t)$ is twice continuously differentiable in mean square under certain conditions.
There is an error in the proof of \cite[Lemma 4.3]{YZ}. In particular, in the last line of equation (4.13) on page 2421 of the aforementioned paper,
a diagonal entry term of the form
$\wdt q_{(j,l)(j,l)}(x,\wdt x)$ in the generator of the coupling was inadvertently left out
in our proof,
resulting an error in the proof of the said theorem. We correct this error here and
 demonstrate that the switching diffusions do possess the smoothness properties. However the results about
the differentiability in mean square is in general  {\em not achievable} if the generator $Q(x)=(q_{ij}(x))$ of $\alpha(t)$  depends on the current state of $X(t)$; see the simple yet illuminating example in Section \ref{sec:c-exm}.
The differentiability in mean square needs to be replaced by differentiability in $p$th moment for an appropriate $p$.
It turns out that the proof is interesting in its own right. In a way, it really displays certain aspects of the salient features of
the continuous state-dependent switching processes.

 If we consider a Markovian switching diffusion process, the switching times and hence the switching process can be generated beforehand because $\alpha(t)$ is independent of the Brownian motion. As can be seen in the proof of this section, the main difficulty we face is that the switching times depend on the continuous component $X(t)$.
 To illustrate, consider a one-dimensional $X(t)$ as an example.
 Denote the solution of \eqref{sde} with two different initial data for the continuous component $(X(0),\alpha(0))=(x,\alpha)$ and $(X(0),\alpha(0))=(\wdt x,\alpha)=(x+\Delta, \alpha)$
 by $X^{x,\alpha}(t)$ and $X^{\wdt x,\alpha}(t)$, respectively. The differentiability is concerned with the limit of the difference quotient $\frac{X^{\wdt x,\alpha}(t) -X^{x,\alpha}(t)}{\wdt x -x }$.
 In the Markovian   switching diffusions, the difference $X^{\wdt x,\alpha}(t) -X^{x,\alpha}(t)$ can be calculated much the same way as in the diffusion case in \cite[VIII, Section 4, pp. 403-412]{GS}. That is,
 we can simply subtract one from the other. The $\alpha(t)$ does not really come into the picture because even $x\not = \wdt x$, the sample paths of $\alpha(t)$ are the same.
 For our continuous-state dependent switching,
  care must be taken. The analysis is more delicate in places because  $\alpha^{x,\alpha}(t)$ and $\alpha^{\wdt x, \alpha}(t)$ can take different values infinitely often.
  In the analysis to follow,
  one of the main insights is the use of
   the first time when  the switching processes $\al^{x,\alpha}(t)$ and $\alpha^{\wdt x,\al}(t)$ are different.

To proceed, we first setup the notation.
For a multi-index or a  vector $\beta=(\beta_1,\cdots,\beta_r)$ with nonnegative integer entries, put $|\beta|=\sum_{i=1}^r\beta_i$ and define
$$D^\beta_x=\dfrac{\partial^{|\beta|}}{\partial x_1^{\beta_1}\dots\partial x_r^{\beta_r}}.$$
Next we define the $L^p$ differentiability of a smooth random function
$\Phi(x_1,\dots,x_r,t)$. Its partial derivative in probability
with respect to $x_i$ is defined as a random variable $\Psi(x_1,\dots,x_r,t)$ such that
$$\dfrac1h\Big(\Phi(x_1,\dots,x_i+h, \dots, x_r,t)-\Phi(x_1,\dots,x_i, \dots, x_r,t)\Big)\to \Psi(x_1,\dots,x_r,t)\text{ in probability} $$ as $h\to0$.
If for some $p>0$,
$$\E\left|\dfrac1h\Big(\Phi(x_1,\dots,x_i+h, \dots, x_r,t)-\Phi(x_1,\dots,x_i, \dots, x_r,t)\Big)-\Psi(x_1,\dots,x_r,t)\right|^p\to0\text{ as }h\to0$$
we say that $\Phi(x_1,\dots,x_r,t)$ has partial derivative
with respect to $x_i$ in $L^p$.
We proceed to obtain the smoothness of the switching diffusion with respect to the initial data in the $L^p$ sense.

\begin{thm}\label{thm-1}
Assume that $b(x,i)$ and $\sigma(x,i)$ are Lipschitz  in $x$ for each $i\in\M$.
Let $(X^{x,\alpha}(t),\alpha^{x,\alpha}(t))$ be the solution to the system given by \eqref{sde} and \eqref{tran}
with initial data $(X(0),\alpha(0))=(x,\alpha)$. Assume that for each $i\in\M$, $b(\cdot,i)$ and $\sigma(\cdot,i)$ have continuous partial derivatives with respect to the
variable $x$ up to the second order and that
\begin{equation}\label{e.bd}
|D^\beta_xb(x,i)|+|D^\beta_x\sigma(x,i)|\leq K_0(1+|x|^{\gamma_0}),
\end{equation}
where $K_0$ and $\gamma_0$ are positive constants and $\beta$ is a multi-index with $|\beta|\leq 2$.
\begin{itemize}
\item[{\rm (a)}] Suppose that
$Q(x)=(q_{ij}(x))$ is bounded and continuous.
 Then $X^{x,\alpha}(t)$ is
twice continuously differentiable with respect to $x$ in probability.
\item[{\rm (b)}] Replace the conditions for $Q(x)$
in the assumptions of part {\rm(a)}  by $Q(x)$ is bounded and
$q_{kj}(x)$ is locally H\"older continuous with H\"older exponent $\lambda$ for some $\lambda\in(0,1]$ in that
 there are $K_1>0$ and $\gamma_1>0$  such that
\begin{equation}\label{holder}
\sum_{k,j\in\M}|q_{kj}(x)-q_{kj}(y)|\leq K_1(1+|x|^{\gamma_1}+|y|^{\gamma_1})|x-y|^\lambda,\,\forall x,y\in\R^r.
\end{equation}
Then $X^{x,\alpha}(t)$ is
twice continuously differentiable in $L^p$ with respect to $x$ for any $0<p<\lambda$.
\end{itemize}
\end{thm}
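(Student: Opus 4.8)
The plan is to run the two solutions $(X^{x,\al},\al^{x,\al})$ and $(X^{\wdt x,\al},\al^{\wdt x,\al})$, with $\wdt x=x+\Delta$, on the \emph{same} Brownian motion $w(\cdot)$ and the \emph{same} Poisson random measure ${\mathfrak p}(\cdot,\cdot)$ via the representation \eqref{sde}--\eqref{eq:ju}, and to organize everything around the first disagreement time of the discrete components,
\[
\tau=\tau_\Delta:=\inf\big\{t\ge 0:\ \al^{x,\al}(t)\neq \al^{\wdt x,\al}(t)\big\}.
\]
Since both switching processes are read off the same ${\mathfrak p}$ through \eqref{h-def}, a jump of ${\mathfrak p}$ at $(s,z)$ with common pre-jump state $i$ produces a disagreement exactly when $z$ falls in the symmetric difference of $\Delta_{ij}(X^{x,\al}(s))$ and $\Delta_{ij}(X^{\wdt x,\al}(s))$; by the explicit choice of the intervals after \eqref{h-def} these share a left endpoint, so the total length of the disagreement region is $\sum_{j}|q_{ij}(X^{x,\al}(s))-q_{ij}(X^{\wdt x,\al}(s))|$. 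Comparing $\tau$ with the first point of the corresponding inhomogeneous Poisson process, I would first record the compensator estimate
\[
\PP(\tau\le t)\le \E\int_0^{t}\ \sum_{k,j\in\M}\big|q_{kj}(X^{x,\al}(s))-q_{kj}(X^{\wdt x,\al}(s))\big|\,ds,
\]
where only the values of the paths on $\{s<\tau\}$ actually enter.

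On $\{\tau>t\}$ the two discrete paths coincide on $[0,t]$, so the genuinely state-dependent difficulty is absent there. To use this cleanly I would introduce the auxiliary process $\bar X(\cdot)$ solving \eqref{sde} with the base switching path $\al^{x,\al}(\cdot)$ frozen in and initial datum $\wdt x$; pathwise uniqueness gives $\bar X(s)=X^{\wdt x,\al}(s)$ for all $s\le t$ on $\{\tau>t\}$. Now $\bar X$ and $X^{x,\al}$ are driven by a common regime-fixed (time-inhomogeneous) diffusion, so the classical flow results in \cite[VIII, Section 4]{GS}, together with the growth bounds \eqref{e.bd} on the first two $x$-derivatives of $b,\sigma$, yield that $(\bar X-X^{x,\al})/\Delta$ converges in every $L^p$ to the solution $\zeta(\cdot)$ of the linear variational equation $d\zeta=\nabla_x b(X^{x,\al},\al^{x,\al})\zeta\,dt+\nabla_x\sigma(X^{x,\al},\al^{x,\al})\zeta\,dw$, $\zeta(0)=I$, with the analogous statement for the second-order difference quotients. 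For part (a), where $Q$ is merely continuous, the integrand above tends to $0$ pointwise as $\Delta\to0$ and dominated convergence gives $\PP(\tau\le t)\to0$; combined with $\PP(|(\bar X-X^{x,\al})/\Delta-\zeta|>\eps)\to0$ and the identity of $X^{\wdt x,\al}$ with $\bar X$ on $\{\tau>t\}$, this yields convergence in probability of the difference quotients, proving (a).

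For part (b) the H\"older hypothesis \eqref{holder} upgrades the compensator bound to a rate. Since $\bar X$ and $X^{x,\al}$ solve the same regime-fixed SDE with Lipschitz coefficients, Gronwall's inequality gives $\E\sup_{s\le t}|\bar X(s)-X^{x,\al}(s)|^{q}\le C_q|\Delta|^{q}$ for every $q$, and on $\{s<\tau\}$ the difference $X^{\wdt x,\al}(s)-X^{x,\al}(s)$ coincides with $\bar X(s)-X^{x,\al}(s)$; plugging this together with \eqref{holder} and H\"older's inequality (to absorb the polynomial factor $1+|X^{x,\al}|^{\gamma_1}+|\bar X|^{\gamma_1}$, which has all moments) into the compensator estimate yields $\PP(\tau\le t)\le C|\Delta|^{\lambda}$. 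To finish the $L^p$ bound I would write $Y_\Delta:=(X^{\wdt x,\al}-X^{x,\al})/\Delta-\zeta$ and split
\[
\E\big[|Y_\Delta|^p\1_{\{\tau\le t\}}\big]\le C|\Delta|^{-p}\,\E\big[|X^{\wdt x,\al}-X^{x,\al}|^{p}\1_{\{\tau\le t\}}\big]+C\,\E\big[|\zeta|^{p}\1_{\{\tau\le t\}}\big].
\]
Applying H\"older's inequality with a large exponent $p'>p$, using the uniform moment bound $\E|X^{\wdt x,\al}(t)-X^{x,\al}(t)|^{p'}\le C$ (both solutions have all moments under the Lipschitz and growth assumptions) and $\PP(\tau\le t)\le C|\Delta|^{\lambda}$, the first term is controlled by $C|\Delta|^{-p+\lambda(1-p/p')}$; choosing $p'>p\lambda/(\lambda-p)$ makes the exponent positive, so this term tends to $0$ precisely when $p<\lambda$, and the $\zeta$-term vanishes likewise because $\zeta$ has all moments. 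Together with $\E[|Y_\Delta|^p\1_{\{\tau>t\}}]\le \E|(\bar X-X^{x,\al})/\Delta-\zeta|^p\to0$, this gives $L^p$ convergence for every $0<p<\lambda$.

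The step I expect to be the crux is the sharp estimate on $\tau$: identifying the compensator of the disagreement events precisely with the $Q$-modulus $\sum_{k,j}|q_{kj}(X^{x,\al})-q_{kj}(X^{\wdt x,\al})|$ rather than a cruder bound, and then the exponent bookkeeping that forces, and shows the optimality of, the threshold $p<\lambda$. The second-derivative claim follows the same template: one differentiates the variational equation a second time to obtain a linear SDE whose inhomogeneous terms are built from the second $x$-derivatives of $b,\sigma$ and are controlled through \eqref{e.bd}, then runs the identical decomposition over $\{\tau>t\}$ and $\{\tau\le t\}$ and the same H\"older interpolation to pass to $L^p$ for $0<p<\lambda$.
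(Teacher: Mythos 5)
Your proposal is correct and follows essentially the same route as the paper: both hinge on the first disagreement time $\tau^\Delta$ of the two switching paths, the bound $\PP\{\tau^\Delta\le T\}=O(|\Delta|^\lambda)$ obtained from the coupling rate $\sum_{k,j}|q_{kj}(X(s))-q_{kj}(\wdt X(s))|$, the classical variational equation for the regime-frozen system before $\tau^\Delta$, and the same H\"older interpolation forcing $p<\lambda$ on the event $\{\tau^\Delta\le T\}$. The only (cosmetic) differences are that you realize the coupling through a common Poisson random measure and the shared-left-endpoint intervals of \eqref{h-def}, whereas the paper writes down Chen's basic coupling generator \eqref{eq:coupled-Q} and applies Dynkin's formula to $\1_{\{k\ne l\}}$, and that you compare with an auxiliary frozen process rather than with the stopped difference quotient directly; both devices yield the same disagreement rate and the same limit $\xi(T)$.
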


\begin{proof}
For ease of presentation and without loss of generality, we prove the result when $X(t)$ is one dimensional.
Fix $(x,\alpha)\in\R\times\M$ and $T>0$.
Let $(X(t),\alpha(t))$ be the switching-diffusion process satisfying \eqref{sde} and \eqref{tran} with
initial condition $(x,\alpha)$ respectively. Likewise, let $(\wdt  X(t),\wdt  \alpha(t))$
be the solution process with initial condition $(\wdt  x,\alpha)$, where $\wdt  x=x+\Delta$ for $0<|\Delta| \ll 1$.
Consider the joint process $(X(t),\wdt  X(t), \alpha(t), \wdt \alpha(t))$.
By the basic coupling method (see e.g., \cite[p. 11]{Chen}),
we can consider them as the solutions to
\begin{equation}\label{e2.3}
\begin{cases}
dX(t)=b(X(t), \alpha(t))dt+\sigma(X(t),\alpha(t))dw(t) \\
d\wdt  X(t)=b(\wdt X(t), \wdt \alpha(t))dt+\sigma(\wdt  X(t),\wdt \alpha(t))dw(t)
\end{cases}
\end{equation}
with initial condition $(x,x+\Delta,\alpha,\alpha)$, where $(\alpha(t),\wdt \alpha(t))$ has the generator $\wdt Q(X(t), \wdt  X(t))$ which is defined by
\begin{equation}\label{eq:coupled-Q}
\begin{split}
\wdt  Q(x,\wdt  x)\wdt  f(k,l)=&\sum_{(j,i\in\M\times\M)}\wdt q_{(k,l)(j,i)}(x,\wdt  x)\left(\wdt  f(j,i)-\wdt  f(k,l)\right)\\
=&\sum_{j\in\M}[q_{kj}(x)-q_{lj}(\wdt  x)]^+(\wdt  f(j,l)-\wdt  f(k,l))\\
&+\sum_{j\in\M}[q_{lj}(\wdt x)-q_{kj}(  x)]^+(\wdt  f(k,j)-\wdt  f(k,l))\\
&+\sum_{j\in\M}[q_{kj}(x)\wedge q_{lj}(\wdt  x)](\wdt  f(j,j)-\wdt  f(k,l)).
\end{split}
\end{equation}

Define
\begin{equation}\label{eq:tau-d-def}\tau^\Delta=\inf\{t\geq0: \alpha(t)\ne\wdt \alpha(t)\}.\end{equation}
Since $\alpha(t)$ and $\wdt \alpha(t)$ are the same up to $\tau^\Delta$,
and $b(\cdot,i)$ and $\sigma(\cdot,i)$ are Lipschitz continuous for each $i\in \M$,
by standard arguments (see e.g. \cite[Lemma 3.3]{MaoY}), we  obtain that
\begin{equation}\label{e3}
\E\sup\limits_{0\leq t\leq T\wedge\tau^\Delta}\{|X(t)-\wdt  X(t)|^2\}\leq K|x-\wdt x|^2.
\end{equation}
Recall from \cite[Lemma 3.2]{YZ} that for any $m>0$ and $0< R<\infty$, there is a $C_m(R,T)>0$ such that
for any pair $(x_0,\alpha_0)$ with $|x_0|\le R$ and $\alpha_0 \in \M$,
\begin{equation}\label{egrowth}
\E\sup_{t\in[0,T]}\{|X^{x_0,\alpha_0}(t)|^m\}\leq C_m(R, T) .
\end{equation}
First, we show that $$\PP\{\tau^\Delta\leq T\}\to0\text{ as } \Delta\to0.$$
Let $\wdt f(k,l)= \1_{\{k\not =l\}}$, where $\1_A$ is the indicator of the set $A$.
By the definition of the function $\wdt f$,  we have
\begin{equation}\label{e.rho}
\begin{aligned}
\wdt  Q(x,\wdt  x)\wdt  f(k,k)=&\sum_{j\in\M, j\ne k}[q_{kj}(x)-q_{kj}(\wdt  x)]^++\sum_{j\in\M, j\ne k}[q_{kj}(\wdt x)-q_{kj}(x)]^+\\
=&\sum_{j\in\M, j\ne k}|q_{kj}(x)-q_{kj}(\wdt  x)|=:\rho(x,\wdt  x, k).
\end{aligned}
\end{equation}
Applying the generalized It\^o formula to \eqref{e2.3} and noting that $\alpha(t)=\wdt \alpha(t), t<\tau^\Delta$,  we obtain that
\begin{equation}\label{e6}
\begin{split}
\PP\{\tau^\Delta\leq T\}&= \E \wdt f\left(\alpha(T\wedge\tau^\Delta),\wdt \alpha(T\wedge\tau^\Delta)\right)\\
&= \E\int_0^{T\wedge\tau^\Delta}\wdt Q(X(t),\wdt  X(t)) \wdt  f(\alpha(t),\wdt \alpha(t))dt
=\E \int_0^{T\wedge\tau^\Delta}\rho(X(t),\wdt  X(t), \alpha(t))dt.
\end{split}
\end{equation}
In view of \eqref{e3},
\begin{equation}\label{e4}
\sup\limits_{t\in[0,T\wedge\tau^\Delta]}|X(t)-\wdt  X(t)|\to 0\text{ in probability as } \Delta\to0.
\end{equation}
By the dominated convergence theorem,
it follows from \eqref{e4} and boundedness of  $\rho(\cdot,\cdot,\cdot)$
that
\begin{equation}\label{e7}
\begin{split}
\lim\limits_{\Delta\to0}\PP\{\tau^\Delta\leq T\}=\lim\limits_{\Delta\to0}\E \int_0^{T\wedge\tau^\Delta}\rho(X(t),\wdt  X(t), \alpha(t))dt=0
\end{split}
\end{equation}
Moreover, if \eqref{holder} is satisfied, then  by H\"older's inequality, estimates \eqref{holder}, and \eqref{egrowth}, we have
\begin{equation}\label{e8}
\begin{split}
\PP\{\tau^\Delta\leq T\}&=\E \int_0^{T\wedge\tau^\Delta}\rho(X(t),\wdt  X(t), \alpha(t))dt\\
&\leq \E \int_0^{T\wedge\tau^\Delta} K_1(1+|X(t)|^{\gamma_1}+|\wdt  X(t)|^{\gamma_1})|X(t)-\wdt  X(t)|^\lambda dt\\
&\leq K_2T\E \sup\limits_{t\leq T\wedge\tau^\Delta}(1+|X(t)|^{\gamma_1}+|\wdt  X(t)|^{\gamma_1})|X(t)-\wdt  X(t)|^\lambda \,\text{( for some } K_2>0)\\
&\leq K_2T\Big(\E \sup\limits_{t\leq T\wedge\tau^\Delta}(1+|X(t)|^{\gamma_1}+|\wdt  X(t)|^{\gamma_1})^{\frac{2}{2-\lambda}}\Big)^{\frac{2-\lambda}{2}}
\Big(\E \sup\limits_{t\leq T\wedge\tau^\Delta}|X(t)-\wdt  X(t)|^2\Big)^{\frac{\lambda}{2}}\\
&\leq \wdt  K_\lambda|\Delta|^\lambda\ \text{ (for some }\wdt  K_\lambda>0).
\end{split}
\end{equation}  As in \cite{YZ}, put $Z^{\Delta}(t) : = \frac{\wdt X(t)- X(t)}{\Delta}$ for $t \ge0$. Then we have
\begin{align*}
Z^{\Delta}(t\wedge \tau^{\Delta})  & =1 + \dfrac1\Delta \int_0^{t\wedge\tau^\Delta}\Big[b(\wdt  X(s),\alpha(s))-b(X(s),\alpha(s))\Big]ds \\
    & \qquad +  \dfrac1\Delta\int_0^{t\wedge\tau^\Delta}\Big[\sigma(\wdt  X(s),\alpha(s))-\sigma(X(s),\alpha(s))\Big]dw(s). \\
\end{align*}
Now, we evaluate the drift:
\begin{equation}\label{e.zdelta}
\begin{split}
\dfrac1\Delta& \int_0^{t\wedge\tau^\Delta}\left[b(\wdt  X(s),\alpha(s))-b(X(s),\alpha(s))\right]ds\\
&=\dfrac1\Delta\int_0^{t}\1_{\{s\leq \tau^\Delta\}}\left[b(\wdt  X(s),\alpha(s))-b(X(s),\alpha(s))\right]ds\\
&= \dfrac1\Delta\int_0^{t}\1_{\{s\leq\tau^\Delta\}}\left(\int_0^1\dfrac{d}{dv}b(X(s)+v(\wdt  X(s)-X(s)),\alpha(s))dv\right)ds \\
&= \int_0^{t} Z^\Delta(s)\1_{\{s\leq\tau^\Delta\}} \int_0^1 b_{x}(X(s)+v(\wdt  X(s)-X(s)),\alpha(s))dv ds.\\
&= \int_0^{t} Z^\Delta(s\wedge\tau^\Delta)\1_{\{s\leq\tau^\Delta\}} \int_0^1 b_{x}(X(s)+v(\wdt  X(s)-X(s)),\alpha(s))dv ds.
\end{split}
\end{equation}
In view of \eqref{e.zdelta} and a similar evaluation for the diffusion part, we have that
$U_\Delta(t)=Z(t\wedge\tau^\Delta)$ satisfies the equation
$$U_\Delta(t)=1+\int_0^tA_\Delta(s) U_\Delta(s)ds+\int_0^tB_\Delta(s) U_\Delta(s)dw(s)$$
where
$$A_\Delta(s) :=\1_{\{s\leq\tau^\Delta\}} \int_0^1 b_{x}(X(s)+v(\wdt  X(s)-X(s)),\alpha(s))dv,$$
$$B_\Delta(s) :=\1_{\{s\leq\tau^\Delta\}} \int_0^1 \sigma_{x}(X(s)+v(\wdt  X(s)-X(s)),\alpha(s))dv.$$
By \eqref{e4} and \eqref{e7}, as $\Delta\to0$,
$$A_\Delta(s) \to b_x(X(s),\alpha(s)) \text{ and } B_\Delta(s) \to \sigma_x(X(s),\alpha(s)) \text{ in probability}.$$
In view of \cite[Theorem 5.2.2]{AF},
\begin{equation}\label{e9}
\E\biggl|\dfrac{\wdt  X(T\wedge\tau^\Delta)- X(T\wedge\tau^\Delta)}{\Delta}-\xi(T)\biggr|^2=\E\bigl|U_\Delta(t)-\xi(T)\bigr|^2\to 0\text{ as } \Delta\to0.
\end{equation}
where
$\xi(t)$ is the solution to
\begin{equation}
\label{eq-xi}
\xi(t)=1+\int_0^tb_x(X(s),\alpha(s))\xi(s)ds+\int_0^t\sigma_x(X(s),\alpha(s))\xi(s)dw(s).
\end{equation}
Note that
\begin{equation}\label{e10}\barray
\dfrac{\wdt  X(T)-X(T)}{\Delta}\ad =\dfrac{\wdt  X(T\wedge\tau^\Delta)- X(T\wedge\tau^\Delta)}{\Delta}\\
\aad \ +\1_{\{\tau^\Delta\leq T\}}\dfrac{\wdt  X(T)-\wdt  X(T\wedge\tau^\Delta)- X(T)+ X(T\wedge\tau^\Delta)}{\Delta}.
\earray\end{equation}
In light of \eqref{e7},
\begin{equation}\label{e11}
\1_{\{\tau^\Delta\leq T\}}\dfrac{\wdt  X(T)-\wdt  X(T\wedge\tau^\Delta)- X(T)+ X(T\wedge\tau^\Delta)}{\Delta}\to 0\text{ in probability as } \Delta\to0.
\end{equation}
It follows from \eqref{e9}, \eqref{e10}, and \eqref{e11} that
\begin{equation}\label{e12}
\dfrac{\wdt  X(T)- X(T)}{\Delta}\to \xi(T)\text{ in probability as } \Delta\to0.
\end{equation}
This proves part (a) of the theorem.

Next we prove part (b) of the theorem.
For $p<\lambda$, letting $\theta=\frac{\lambda-p}2$, we have
\begin{equation}
\begin{aligned}
\E\1_{\{\tau^\Delta\leq T\}}&\left|\wdt  X(T)-\wdt  X(T\wedge\tau^\Delta)- X(T)+ X(T\wedge\tau^\Delta)\right|^p\\
\leq& 2^p \E\1_{\{\tau^\Delta\leq T\}}\Big(\sup_{t\in[0,T]}\{|X(t)|+|\wdt  X(t)|\}\Big)^p\\
\leq& 2^p (\E\1_{\{\tau^\Delta\leq T\}})^{\frac{p+\theta}\lambda}\left(\E\Big(\sup_{t\in[0,T]}\{|X(t)|+|\wdt  X(t)|\}\Big)^{\frac{p\lambda}\theta}\right)^{\frac{\theta}{\lambda}}\\
\leq&\wdt  K_2 |\Delta|^{p+\theta}\text{ (for some } \wdt  K_2\text{ using \eqref{egrowth} and \eqref{e8})},
\end{aligned}\label{e13}
\end{equation}
which implies that
\begin{equation}\label{e14}
\E\1_{\{\tau^\Delta\leq T\}}\Big|\frac{\wdt  X(T)-\wdt  X(T\wedge\tau^\Delta)- X(T)+ X(T\wedge\tau^\Delta)}\Delta\Big|^p\to 0\text{ as }\Delta\to 0.
\end{equation}
As a result of \eqref{e9}, \eqref{e10} and \eqref{e14}, we obtain that
\begin{equation}\label{e15}
\E\Big|\dfrac{\wdt  X(T)- X(T)}{\Delta}\to \xi(T)\Big|^p\to0\text{  as } \Delta\to0.
\end{equation}
Thus, $X(t)$ is differentiable in $L^p$ for $p<\lambda$ if \eqref{holder} is satisfied.
Likewise, it can be shown that $X(t)$ is twice differentiable in probability and in $L^p$ for $p< \lambda$ under the condition \eqref{holder}.
\end{proof}

\begin{rem}{\rm
In the proof, we use the  boundedness condition on $Q(x)$
to derive \eqref{e7} from \eqref{e4}  using the dominated convergence theorem.
If we assume the polynomial growth
$|q_{ij}(x)|\leq K_0(1+|x|^{\gamma_0})$,
we will have
$\rho(x,\tilde x,k)\leq \widetilde K_0(1+|x|+|\tilde x|)^{\gamma_0}$
for some $\wdt K_0>0$.
Thus, we have $$\E \left(\int_0^{T\wedge\tau^\Delta}\rho(X(t),\wdt  X(t), \alpha(t))dt\right)^2
\leq T^2\wdt K_0^2\E \sup_{t\in[0,T]}(1+|X(t)|+|\wdt  X(t)|)^{2\gamma_0},
$$
which results in the uniform integrability of $\left\{\int_0^{T\wedge\tau^\Delta}\rho(X(t),\wdt  X(t), \alpha(t))dt: \Delta\in(0,1]\right\}.$
As a result, we derive from \eqref{e4} and the Vitali convergence theorem
that \eqref{e7} still holds
if we replace the boundedness of $q_{ij}(x)$ by the condition
$|q_{ij}(x)|\leq K_0(1+|x|^{\gamma_0})$.
All the
remaining arguments in the proof of Theorem \ref{thm-1} still carry over.

}
\end{rem}

\subsection{A Counterexample for Smoothness under $L^1$ Convergence}\label{sec:c-exm}
The smoothness with respect to the initial data was obtained in the last section.
The convergence is in the sense of $L^p$ convergence. One immediate question is: Can we do better?
Is it possible to get, for example, $L^1$ convergence?
In general, this question has a negative answer.
The intuitive arguments are as follows.
$X(t)$ and $\wdt X(t)$ evolves
similarly up to $\tau^\Delta$, the moment $\alpha(t)$ and $\wdt\alpha(t)$
switch apart.
Thus, if $T\leq\tau^\Delta$,
it is easy to estimate
$
\frac{X(T)-\wdt X(T)}{\Delta}
$ using the (local) Lipschitz condition of the coefficients.
However, if $T> \tau^\Delta$,
we cannot expect that $
\frac{X(T)-\wdt X(T)}{\Delta}
$ is bounded
since $X(t)$ and $\wdt X(t)$ follow different equations after $\tau^\Delta$.
Using this observation, in the proof of Theorem \ref{thm-1},
we decompose $\frac{X(T)-\wdt X(T)}{\Delta}$
into two parts in \eqref{e10}.
The first term on the right-hand side of \eqref{e10}
converges to $\xi(T)$ in $L^2$
while the second term converges  to $0$ in probability.
Thus,
if $\frac{X(T)-\wdt X(T)}{\Delta}$ were to converge in $L^1$ as $\Delta\to0$,
we would expect that
$$\1_{\{\tau^\Delta\leq T\}}\dfrac{\wdt  X(T)-\wdt  X(T\wedge\tau^\Delta)- X(T)+ X(T\wedge\tau^\Delta)}{\Delta}
\to 0\,\text{ in } L^1.
$$
Since $X(t)$ and $\wdt X(t)$ evolve completely differently after $\tau^\Delta$,
we cannot expect that $X(T)-\wdt X(T)\to 0$ as $\Delta\to 0$ in the event $\{T>\tau^\Delta\}$.
Thus,
\begin{equation}\label{e-i1}
\dfrac{\wdt  X(T)-\wdt  X(T\wedge\tau^\Delta)- X(T)+ X(T\wedge\tau^\Delta)}{\Delta}=O(\Delta^{-1}) \text{ as } \Delta\to0\,\text{ if }\,  T>\tau^\Delta.
\end{equation}
On the other hand,
it follows from \eqref{e.rho}, \eqref{e6}, and
the H\"older continuity \eqref{holder}
that
\begin{equation}\label{e-i2}
\PP\{T\geq\tau^\Delta\}=O(\Delta^{\lambda}) \text{ as } \Delta\to0.	
\end{equation}
As a result of \eqref{e-i1} and \eqref{e-i2},
we can see that $\1_{\{\tau^\Delta\leq T\}}\frac{\wdt  X(T)-\wdt  X(T\wedge\tau^\Delta)- X(T)+ X(T\wedge\tau^\Delta)}{\Delta}$ does not in general
converge to $0$ in $L^p$ for $p\geq\lambda$.
Because the H\"older exponent $\lambda$ cannot exceed $1$ except for the case when
the $Q$-matrix is constant,
 we cannot, in general, obtain the $L^1$ convergence.

In this section, we provide an example
showing that the process $X^{x,\alpha}(t)$ is not differentiable in $L^2$ or in $L^1$ although
the coefficients $b(\cdot,\cdot),\sigma(\cdot,\cdot)$ and   $Q(x)$ are smooth and bounded.
In order to simplify the calculations,
we consider an example in which
the $Q$-matrix is reducible and there is no diffusion part.
It is possible to construct a counter-example
with irreducible $Q$-matrix and nondegenerate diffusion.
However, it will involve more cumbersome and tedious calculations.
It appears to be more instructive to construct an example with structure as simple as possible to highlight the distinctions of $x$-dependent switching and Markovian switching.
Thus, we omit such an example here.

Let $\M=\{1,2\}$ and consider the equation
\begin{equation}\label{eq-sde-ex}
dX(t)=b(\alpha(t))dt
\end{equation}
where
$b(1)=0$ and $b(2)=1.$
Suppose that the switching process $\alpha(t)$ has generator
$Q=\begin{pmatrix}
  -f(x)& f(x)
  \\0&0
\end{pmatrix},$
where $f(x)$ is a smooth positive function with compact support and
$f(x)=x$ for $x\in[1, 2]$.
Let $\Delta_{12}(x)=[0,f(x))$,
$\Delta_{21}(x)=\emptyset$.
The process $\alpha(t)$ can be defined as the solution to
$$d\alpha(t)=\int_{\R}h(X(t),\alpha(t-), z)\p(dt, dz)$$	
where
$h(x, 1, z)=\1_{\{z\in\Delta_{12}(x)\}}$ and $h(x, 2, z)=0,$
$\p(dt, dz)$ is a Poisson random measure with intensity $dt\times\m(dz)$ and $\m$ is the Lebesgue measure on $\R$.

 Let $y>0$ and  $(X^{y, 1}(t),\alpha^{y,1}(t))$ be the solution with initial data $(y, 1)$.  Let
$\tau^y=\inf\{t\geq0: \alpha^{y,1}(t)=2\}.$
For $x\in[0,1]$ we have
$X^{1+x,1}(t)=1+x,\ \forall\,t\in[0,\tau^{1+x}]$, and $\alpha^{1+x}(t)$ stays in  state $2$ once it jumps into
$2$ since the jump intensity $q_{21}(x)=0$ for any $x\in\R$.	
Thus,
\bea \ad X^{1+x,1}(T)=1+x+T-T\wedge\tau^{x+1}
\ \hbox{and}\\
\ad X^{1,1}(T)=1+T-T\wedge\tau^{1}.\eea
As a result,
\begin{equation}\label{e0}
Z^{1+x}:=\dfrac{X^{1+x,1}(T)-X^{1,1}(T)}{x}=1+\dfrac{[T\wedge\tau^1-T\wedge\tau^{x+1}]}x
\end{equation}

If $Z^{1+x}$ converges in $L^1$ to a variable $Z_0$, then
the sequence $Z^{1+\frac{1}n}$ must be a Cauchy sequence in $L^1$.
Then it follows that,
\begin{equation}\label{e1}
\E\big|Z^{1+\frac{2}n}-Z^{1+\frac{1}n}\big|\to 0 \text{ as } n\to\infty.
\end{equation}

Now, we show that \eqref{e1} cannot be satisfied.
Let
\bea \ad Y_0=\inf\{t\geq 0: \int_0^t\int_\R\1_{\{z\in[0,1)\}}\p(ds, dz)\ne0\},\\
\ad Y_1=\inf\{t\geq 0: \int_0^t\int_\R\1_{\{z\in[1,1+\frac1n)\}}\p(ds, dz)\ne 0\},\\
\ad Y_2=\inf\{t\geq 0: \int_0^t\int_\R\1_{\{z\in[1+\frac1n,1+\frac2n)\}}\p(ds, dz)\ne 0\}.\eea
Since $[0,1), [1,1+\dfrac{1}n), [1+\frac1n,1+\frac2n)$ are disjoint sets,
we have that $Y_0, Y_1, Y_2$ are three independent exponential random variables
with parameter $1, \frac1n, \frac1n$, respectively.
Note that \begin{equation}
\label{tau-1+x}
\tau^{1+x}=\inf\{t\geq 0: \int_0^t\int_\R\1_{\{z\in[0,1+x)\}}\p(ds, dz)\ne 0\}.
\end{equation}
Thus,
$\tau^1=Y_0$, $\tau^{1+\frac1n}=Y_0\wedge Y_1$, and $\tau^{1+\frac2n}=Y_0\wedge Y_1\wedge Y_2.$
From \eqref{e0}, we have
\begin{equation}\label{e2}
\begin{aligned}
\Big|Z^{1+\frac{2}n}-Z^{1+\frac{1}n}\Big|=&\Big|n[T\wedge\tau^1-T\wedge\tau^{1+\frac1n}]-\frac{n}2[T\wedge\tau^1-T\wedge\tau^{1+\frac2n}]\Big|
\end{aligned}
\end{equation}
Let
$A=\Big\{Y_1\leq Y_2, Y_1<\dfrac{T}3, Y_0\in[\dfrac{2T}3,T]\Big\}.$
When $\omega\in A$,
$T\wedge\tau^1=\tau^1=Y_0$, $T\wedge\tau^{1+\frac1n}=T\wedge\tau^{1+\frac2n}=Y_1$.
As a result, when $\omega\in A$,
\begin{equation}\label{e2.4}
|Z^{1+\frac{2}n}-Z^{1+\frac{1}n}|=\Big|n(Y_0-Y_1)-\frac{n}2(Y_0-Y_1)\Big|=\frac{n}2(Y_0-Y_1)\geq\dfrac{Tn}6.
\end{equation}
By direct calculation,
\begin{equation}\label{e2.5}
\begin{aligned}
\PP(A)=&\PP\left\{Y_1\leq Y_2, Y_1<\dfrac{T}3\right\}\times\PP\left\{Y_0\in[\dfrac{2T}3,T]\right\}\\
=& \dfrac{1}n\left(1-\exp\Big(-(1+\dfrac{2}n)\dfrac{T}3\Big)\right)\times \Big(\exp(-\frac{2T}3)-\exp(-T)\Big)
\end{aligned}
\end{equation}
In view of \eqref{e2.4} and \eqref{e2.5},
\bea \E |Z^{1+\frac{2}n}-Z^{1+\frac{1}n}|\ad \geq \E\1_{A}|Z^{1+\frac{2}n}-Z^{1+\frac{1}n}|
\geq  \dfrac{Tn}6\PP(A)\\
\ad = \dfrac{T}6\left(1-\exp \Big(-(1+\dfrac{2}n)\dfrac{T}3\Big)\right)\times \Big(\exp (-\frac{2T}3)-\exp (-T)\Big)\\
\ad \rightarrow \dfrac{T}6\left(1-\exp \Big(-\dfrac{T}3\Big)\right)\times \Big(\exp (-\frac{2T}3)-\exp (-T)\Big)\ne 0\text{ as } n\to\infty.
\eea
As a result, $\{Z^{1+\frac{1}n}: n\in\N\}$ is not a Cauchy sequence in $L^1$.
Thus neither is it Cauchy in $L^p$, $p>1$.
 In the example, the continuous-state dependence makes the switching diffusions markedly different from that of the Markov modulated switching
diffusions.

\section{Further Properties}\label{sec:fur}
Continuing on our investigation, we derive further properties in this section.
It is arranged in two subsections.

\subsection{Uniform Continuity with Respect to Initial Data}\label{sec:con}
This section aims to obtain uniform estimates on a finite interval for the difference of two solutions
$X^{\wdt x,\al}(t) - X^{x,\al}(t)$ with distinct initial data on the continuous component.
Again, such a property is distinctly different from that of the Markov modulated switching diffusions; see Remark \ref{rem:4.2} for details.

\begin{thm}\label{prop4.2}
Assume that $b(x,i)$ and $\sigma(x,i)$ are globally Lipschitz   in $x$ $($with Lipschitz constant $\kappa)$ for each $i\in\M$,
and  that $Q(x)=(q_{ij}(x))$ is bounded and for some $\gamma_{1} > 0$ we have
\begin{equation}\label{qlip}
\sum_{k,j\in\M}|q_{kj}(x)-q_{kj}(y)|\leq K_1(1+|x|^{\gamma_1}+|y|^{\gamma_1})|x-y|,\,\forall x,y\in\R^r.
\end{equation}
Then, there exists a constant $C_T$ depending only on $T$, $K_1$, and $\kappa$
such that for any $x,\wdt x\in\R^r$, and $\al\in\M$, we have
\begin{equation}\label{e5.1}
\E \sup\limits_{t\in[0,T]}|X^{\wdt x,\al}(t) - X^{x,\al}(t)| \le C_T(1+|x|^{\gamma_1+2}+|\wdt x|^{\gamma_1+2}) |\wdt x - x|.
\end{equation} \end{thm}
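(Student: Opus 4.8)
The plan is to reuse the basic coupling $(X(t),\wdt X(t),\al(t),\wdt\al(t))$ from the proof of Theorem \ref{thm-1}, together with the first decoupling time $\tau^\Delta$ of \eqref{eq:tau-d-def}, and to split $\E\sup_{t\in[0,T]}|X(t)-\wdt X(t)|$ according to whether the discrete components have separated. Writing $D(t)=\wdt X(t)-X(t)$, I would use the pathwise bound
\[
\sup_{t\in[0,T]}|D(t)|\le \sup_{t\in[0,T\wedge\tau^\Delta]}|D(t)|+\1_{\{\tau^\Delta\le T\}}\sup_{t\in[\tau^\Delta,T]}|D(t)|
\]
and estimate the two terms separately. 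The first is the easy one: since $\al=\wdt\al$ on $[0,\tau^\Delta]$ and $b,\sigma$ are globally Lipschitz, estimate \eqref{e3} gives $\E\sup_{t\le T\wedge\tau^\Delta}|D(t)|^2\le K|\wdt x-x|^2$, so by Cauchy--Schwarz this term is already $\le \sqrt K\,|\wdt x-x|$, linear in $|\wdt x-x|$ with a constant depending only on $\kappa$ and $T$.

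For the post-decoupling term I would first remove the path-dependence after $\tau^\Delta$ by conditioning on $\mathcal F_{\tau^\Delta}$. Each of $X,\wdt X$ is marginally a switching diffusion, so the Markov property and the linear-growth moment bound underlying \eqref{egrowth} give, on $\{\tau^\Delta\le T\}$,
\[
\E\Big[\sup_{t\in[\tau^\Delta,T]}|D(t)|\,\Big|\,\mathcal F_{\tau^\Delta}\Big]\le C\big(1+|X(\tau^\Delta)|+|\wdt X(\tau^\Delta)|\big).
\]
Taking expectations, the post-decoupling term is controlled by $\E\big[\1_{\{\tau^\Delta\le T\}}\,g(X(\tau^\Delta),\wdt X(\tau^\Delta))\big]$ with the $\mathcal F_{\tau^\Delta}$-measurable weight $g(x,\wdt x)=1+|x|+|\wdt x|$; raising the power of $g$ in this conditional estimate is what eventually produces the exponent $\gamma_1+2$ in the statement.

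The crux is to show that this weighted hitting expectation is of order $|\wdt x-x|$ rather than $|\wdt x-x|^{1/2}$. A naive Cauchy--Schwarz against $\PP\{\tau^\Delta\le T\}^{1/2}=O(|\wdt x-x|^{1/2})$ (from \eqref{e8}) only yields the square root and is too weak, because once $\tau^\Delta<T$ the processes $X,\wdt X$ may be far apart and $\sup_{[\tau^\Delta,T]}|D|$ is not small. The device that saves the linear rate is to note that $\tau^\Delta$ is the first jump of the decoupling counter, whose stochastic intensity on $\{s<\tau^\Delta\}$ is exactly $\rho(X(s),\wdt X(s),\al(s))$ of \eqref{e.rho}. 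Applying the generalized It\^o/Dynkin formula to $\wdt f(x,\wdt x,k,l)=g(x,\wdt x)\1_{\{k\ne l\}}$ and stopping at $\tau^\Delta$ (exactly as in \eqref{e6}, but now carrying the weight $g$) gives
\[
\E\big[\1_{\{\tau^\Delta\le T\}}\,g(X(\tau^\Delta),\wdt X(\tau^\Delta))\big]=\E\int_0^{T\wedge\tau^\Delta}g(X(s),\wdt X(s))\,\rho(X(s),\wdt X(s),\al(s))\,ds.
\]
On the right-hand side the integration is confined to $[0,\tau^\Delta]$, where $\al=\wdt\al$ and hence $|X-\wdt X|$ is genuinely small. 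Bounding $\rho\le K_1(1+|X|^{\gamma_1}+|\wdt X|^{\gamma_1})|X-\wdt X|$ by \eqref{qlip} and then applying Cauchy--Schwarz, using \eqref{egrowth} for the polynomial weight $g\cdot(1+|X|^{\gamma_1}+|\wdt X|^{\gamma_1})$ and \eqref{e3} for $\sup_{s\le T\wedge\tau^\Delta}|X-\wdt X|$, converts the time integral into $C(1+|x|^{\gamma_1+2}+|\wdt x|^{\gamma_1+2})|\wdt x-x|$. Combining with the first term completes the proof, and notably no Gronwall iteration is required.

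The main obstacle is exactly this last step, and it is where the continuous-state dependence is felt: the indicator $\1_{\{\tau^\Delta\le T\}}$ must be replaced by its compensator $\int_0^{T\wedge\tau^\Delta}\rho\,ds$, so that the small-probability decoupling event is re-expressed through the rate $\rho$, which is proportional to $|X-\wdt X|$ and therefore $O(|\wdt x-x|)$ in $L^2$ on all of $[0,\tau^\Delta]$ by \eqref{e3}. In carrying this out I would need to check that $g$ is $\mathcal F_{\tau^\Delta}$-measurable (to legitimize the conditioning), that the Dynkin/compensator identity holds up to $\tau^\Delta$ (a standard localization against the moment bound \eqref{egrowth}), and that all constants produced by \eqref{egrowth} and \eqref{e3} depend only on $T$, $K_1$, and $\kappa$ under the global Lipschitz hypothesis.
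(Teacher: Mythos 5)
Your proposal is correct and follows essentially the same route as the paper: the same decomposition at the decoupling time $\tau^\Delta$, the estimate \eqref{e3} plus Cauchy--Schwarz for the pre-decoupling part, conditioning on the stopped $\sigma$-algebra with the strong Markov property and moment bounds for the post-decoupling part, and the key Dynkin/compensator identity for a polynomially weighted indicator of decoupling, which converts the weighted hitting expectation into $\E\int_0^{T\wedge\tau^\Delta} g\,\rho\,ds = O(|\wdt x - x|)$. The only cosmetic difference is that the paper takes the weight $1+|x|^2+|\wdt x|^2$ (so that the test function is $C^2$ and vanishes on the diagonal $k=l$, killing the diffusion part of the coupled generator) and then dominates $1+|x|+|\wdt x|$ by it, which is exactly the ``raising the power'' you anticipate.
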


\begin{proof}
By \cite[Proposition 3.5]{ZY}, for any $m\in\N$, there is a constant $C_m$ depending only on the Lipschitz constant $\kappa$
such that
\begin{equation}\label{e5.2}
\E\left(\sup\limits_{t\in[0,T]}|X^{x_0,\al_0} (t)|^m\right)\leq C_m(1+|x_0|^m)\exp(C_m T)\,\forall\, (x_0,\alpha_0)\in\R^r\times\M, T>0.
\end{equation}
Let $(X(t),\wdt  X(t), \alpha(t), \wdt \alpha(t))$ be as in the proof of Theorem \ref{thm-1}. Again, denote $\Delta = \wdt x -x$
and recall the definition of $\tau^\Delta$ in \eqref{eq:tau-d-def}.
Then
$$\sup\limits_{t\in[0,T]}|X(t) - \wdt X(t)|\leq \sup\limits_{t\in[0,T\wedge\tau^\Delta]}|X(t) - \wdt X(t)|+\sup\limits_{t\in(T\wedge\tau^\Delta,T]}|X(t) -\wdt X(t)|$$
Hence
\begin{equation}\label{e5.3}
\begin{aligned}
\E&\left(\sup\limits_{t\in[0,T]}|X(t) - \wdt X(t)|\right)\\
&\leq \E\left(\sup\limits_{t\in[0,T\wedge\tau^\Delta]}|X(t) - \wdt X(t)|\right)+\E\left(\1_{\{\tau^\Delta\leq T\}}\sup\limits_{t\in(T\wedge\tau^\Delta,T]}|X(t) - \wdt X(t)|\right).
\end{aligned}
\end{equation}
Let $\F_{T\wedge\tau^\Delta}$ be the $\sigma$-algebra generated by the processes $(X(t),\wdt  X(t), \alpha(t), \wdt \alpha(t))$
up to
the  time $T\wedge\tau^\Delta$.
We have that
\begin{equation}\label{e5.4}
\begin{aligned}
\E&\left[\1_{\{\tau^\Delta\leq T\}}\sup\limits_{t\in(T\wedge\tau^\Delta,T]}|X(t) - \wdt X(t)|\right]\\
&=\E\left[\E\Big(\1_{\{\tau^\Delta\leq T\}}\sup\limits_{t\in(T\wedge\tau^\Delta,T]}|X(t) - \wdt X(t)|\Big|\F_{T\wedge\tau^\Delta}\Big)\right]\\
&=\E\left[\1_{\{\tau^\Delta\leq T\}}\E\Big(\sup\limits_{t\in(T\wedge\tau^\Delta,T]}|X(t) - \wdt X(t)|\Big|\F_{T\wedge\tau^\Delta}\Big)\right]\\
&\leq\E\left[\1_{\{\tau^\Delta\leq T\}}\E\Big(\sup\limits_{t\in(T\wedge\tau^\Delta,T]}\{|X(t)|+|\wdt X(t)|\}\Big|\F_{T\wedge\tau^\Delta}\Big)\right]\\
&\leq\E\left[\1_{\{\tau^\Delta\leq T\}}\E\Big(\sup\limits_{t\in(T\wedge\tau^\Delta,T\wedge\tau^\Delta+T]}\{|X(t)|+\wdt X(t)|\}\Big|\F_{T\wedge\tau^\Delta}\Big)\right]\\
&\leq 2C_1e^{C_1T}\E\left[\1_{\{\tau^\Delta\leq T\}}\big(1+|X(T\wedge\tau^\Delta)|+|\wdt X(T\wedge\tau^\Delta)|\big)\right],
\end{aligned}
\end{equation}
where the last inequality follows from  the strong Markov property of $(X(t),\wdt  X(t), \alpha(t), \wdt \alpha(t))$ and \eqref{e5.2}.
Note that the coupled process $(X(t),\wdt  X(t), \alpha(t), \wdt \alpha(t))$  is the solution to \eqref{e2.3}.
For $k,l\in\M$, let $\wdt A_{kl}$ be the generator of the diffusion
\begin{equation}\label{e5.5}
\begin{cases}
dY(t)=b(Y(t), k)dt+\sigma(Y(t),k)dw(t) \\
d\wdt  Y(t)=b(\wdt Y(t), l)dt+\sigma(\wdt  Y(t),l)dw(t)
\end{cases}
\end{equation}
Then the generator $\wdt\Lom$ of $(X(t),\wdt  X(t), \alpha(t), \wdt \alpha(t))$  is given by
\begin{equation}
\label{eq:coupled-operator}
 \wdt\Lom f(x,\wdt x, k, l)=\wdt A_{kl} f(x,\wdt x,k,l)+\wdt Q(x,\wdt x) f(x,\wdt x, k, l).
\end{equation}
Define
$U(x,\wdt x, k, l)=V(k,l)(1+|x|^2+|\wdt x|^2)=\1_{\{k\ne l\}}(1+|x|^2+|\wdt x|^2)$.
Clearly, when $k=l$, $U(x,\wdt x,k,k)=0$, for all $x$ and $\wdt x$. Thus, $\wdt A_{kk} U(x,\wdt x, k, k)=0$, which combined with \eqref{e.rho} implies that \begin{equation}\label{e5.6}\begin{aligned}
\wdt\Lom U(x,\wdt x, k, k)& =\wdt Q(x,\wdt x) U(x,\wdt x, k, k)\\ & =(1+|x|^2+|\wdt x|^2)\wdt Q(x,\wdt x) V(k,k)=(1+|x|^2+|\wdt x|^2)\rho(x,\wdt x,k),
\end{aligned}\end{equation}
where $\rho(x,\wdt x,k)$ is defined in \eqref{e.rho}.
Since all moments of $X(t)$ and $\wdt X(t)$ are bounded, applying Dynkin's formula,
we have
\begin{equation}\label{e5.7}
\begin{aligned}
\E U&\big(X(T\wedge\tau^\Delta),\wdt X(T\wedge\tau^\Delta), \alpha(T\wedge\tau^\Delta), \wdt\alpha(T\wedge\tau^\Delta)\big)\\
=&\ U(x,\wdt x, \alpha, \alpha)+\E\int_0^{T\wedge\tau^\Delta}\wdt\Lom U(X(s),\wdt X(s), \alpha(s), \wdt\alpha(s))ds\\
=&\ \E\int_0^{T\wedge\tau^\Delta}(1+|X(s)|^2+|\wdt X(s)|^2)\rho(X(s),\wdt X(s),\alpha(s))ds,
\end{aligned}
\end{equation}
where
the last equality above follows from \eqref{e5.6} and the fact $\alpha(t)=\wdt\al(t)$ if $t<\tau^\Delta$.
Then  by H\"older's inequality, estimates \eqref{qlip}, and \eqref{e5.2}, we have
\begin{equation}\label{e5.8}
\begin{split}
\E&  \int_0^{T\wedge\tau^\Delta}(1+|X(t)|^2+|\wdt X(t)|^2)\rho(X(t),\wdt  X(t), \alpha(t))dt\\
& \leq \E \int_0^{T\wedge\tau^\Delta} \wdt K(1+|X(t)|^{\gamma_1+2}+|\wdt  X(t)|^{\gamma_1+2})|X(t)-\wdt  X(t)| dt\,\,\text{(for some } \wdt K>0)\\
&\leq \wdt KT\E \sup\limits_{t\leq T\wedge\tau^\Delta}\left\{(1+|X(t)|^{\gamma_1+2}+|\wdt  X(t)|^{\gamma_1+2})|X(t)-\wdt  X(t)|\right\}\\
&\leq \wdt KT\Big(\E \sup\limits_{t\leq T\wedge\tau^\Delta}\left\{(1+|X(t)|^{\gamma_1+2}+|\wdt  X(t)|^{\gamma_1+2})^{2}\right\}\Big)^{\frac{1}{2}}
\Big(\E \sup\limits_{t\leq T\wedge\tau^\Delta}\left\{|X(t)-\wdt  X(t)|)^2\right\}\Big)^{\frac{1}{2}}\\
& \leq \wdt  K_{1,T}(1+|x|^{\gamma_1+2}+|\wdt x|^{\gamma_1+2})|\Delta|\ \text{ (for some }\wdt  K_{1,T}>0),
\end{split}
\end{equation}
where the last inequality follows from \eqref{e3} and \eqref{e5.2}.

In view of \eqref{e5.4}, \eqref{e5.7}, and \eqref{e5.8},
\begin{equation}\label{e5.9}
\begin{aligned}
\E&\left(\1_{\{\tau^\Delta\leq T\}}\sup\limits_{t\in[T\wedge\tau^\Delta,T]}|X(t) - \wdt X(t)|\right)\\
&\leq
2C_1e^{C_1T}\E\left(\1_{\{\tau^\Delta\leq T\}}\big(1+|X(T\wedge\tau^\Delta)|+|\wdt X(T\wedge\tau^\Delta)|\big)\right)\\
&\leq 6C_1e^{C_1T}\E\left(\1_{\{\tau^\Delta\leq T\}}\big(1+|X(T\wedge\tau^\Delta)|^2+|\wdt X(T\wedge\tau^\Delta)|^2\big)\right)\\
&\leq 6C_1e^{C_1T}\E U\big(X(T\wedge\tau^\Delta),\wdt X(T\wedge\tau^\Delta), \alpha(T\wedge\tau^\Delta), \wdt\alpha(T\wedge\tau^\Delta)\big)\\
&\leq6C_1e^{C_1T}\wdt  K_{1,T}(1+|x|^{\gamma_1+2}+|\wdt x|^{\gamma_1+2})|\Delta|.
\end{aligned}
\end{equation}
The proof is complete by applying \eqref{e5.9} and \eqref{e3} to \eqref{e5.2}.
\end{proof}

\begin{rem}\label{rem:4.2}{\rm
In contrast to  switching diffusions with Markovian switching, in which the switching is independent of the Brownian motion,
in our case,  we have to estimate $X(t)-\wdt X(t)$ after $\tau^\Delta$ if $\tau^\Delta\leq T$. Note that the difference
$|X(t)-\wdt X(t)|$ after $\tau^\Delta$ cannot be estimated by something related to $|x-\wdt x|$
since after $\tau^\Delta$, the evolutions of $X(t)$ and $\wdt X(t)$ are quite different.
We can only estimate $|X(t)-\wdt X(t)|$ by $|X(t)|+|\wdt X(t)|$ for $t>\tau^\Delta$.
For this reason, in general,
it does not seem that  $C_T(1+|x|^{\gamma_1+2}+|\wdt x|^{\gamma_1+2})$ in \eqref{e5.1}
can be replaced with a
 constant $K$ independent of $x$  and $\wdt x$,
even under the condition that $Q(x)$ is globally Lipschitz.
However, if we assume $b$ and $\sigma$ are bounded,
$C_T(1+|x|^{\gamma_1+2}+|\wdt x|^{\gamma_1+2})$ in \eqref{e5.1} can be replaced by a constant $K$.
Finally, we remark that if the condition for $Q(x)$ is reduced  to
$$ \sum_{k,j\in\M}|q_{kj}(x)-q_{kj}(y)|\leq K_1(1+|x|^{\gamma_1}+|y|^{\gamma_1})|x-y|^\lambda,\ \hbox{ for all }\  x,y\in\R^r \ \hbox{ and some }\ 0<\lambda \le 1 $$
then we can obtain
\begin{equation*}
\E \sup\limits_{t\in[0,T]}|X^{\wdt x,\al}(t) - X^{x,\al}(t)| \le C_T(1+|x|^{\gamma_1+2}+|\wdt x|^{\gamma_1+2}) |\wdt x - x|^\lambda.
\end{equation*}
}
\end{rem}

\subsection{Smoothness of A Functional of the Switching Diffusion}\label{sec:fun}
Continuing on our investigations, this section obtains smoothness of nonlinear functional of the switching diffusions.
Once again, the continuous-state dependent switching presents much difficulty. Our task is to untangle the
process $\alpha^{x,\alpha}(t)$ and $\alpha^{\wdt x,\alpha}(t)$ as in the previous sections.
As alluded to in Remark \ref{rem:4.2}, the difficulty due to the continuous state dependent switching is particularly pronounced.

\begin{thm}\label{thm5.2} For each $i\in\M$,
assume that $b(x,i)$ and $\sigma(x,i)$ are globally Lipschitz  in $x$ $($with Lipschitz constant $\kappa)$,
that  $b(\cdot,i),\sigma(\cdot,i),  q_{ij}(\cdot)\in C^2$,
 that there is
a $\phi(\cdot,i) \in C^2$,  and that
\begin{equation}\label{e.5bd}
|D^\beta_xb(x,i)|+|D^\beta_x\sigma(x,i)|+|D^\beta_x\phi(x,i)|\leq K(1+|x|^{\gamma}), i\in\M, |\beta|\leq 2.
\end{equation}
for some positive constant $\gamma$.
Assume further that $|D^\beta q_{ij}(\cdot)|$ are Lipschitz and bounded uniformly by some constant $M$ for $|\beta|\leq 2$.
Then, $u(t,x,i)=\E[\phi(X^{x,i}(t),\alpha^{x,i}(t))]$
is twice continuously differentiable with respect to the variable $x$.
\end{thm}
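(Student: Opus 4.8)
The plan is to represent $u$ through the coupled process $(X(t),\wdt X(t),\alpha(t),\wdt\alpha(t))$ of \eqref{e2.3} built in the proof of Theorem \ref{thm-1}, and to analyze the difference quotient $[u(t,\wdt x,\alpha)-u(t,x,\alpha)]/\Delta$ with $\Delta=\wdt x-x$ by conditioning on the first disagreement time $\tau^\Delta$ of \eqref{eq:tau-d-def}. Writing
\[
u(t,\wdt x,\alpha)-u(t,x,\alpha)=\E\big[\1_{\{\tau^\Delta>t\}}\big(\phi(\wdt X(t),\alpha(t))-\phi(X(t),\alpha(t))\big)\big]+\E\big[\1_{\{\tau^\Delta\le t\}}\big(\phi(\wdt X(t),\wdt\alpha(t))-\phi(X(t),\alpha(t))\big)\big],
\]
the first (before-split) term is handled as in Theorem \ref{thm-1}: on $\{\tau^\Delta>t\}$ both copies share the same $\alpha$-path, so a first-order Taylor expansion of $\phi$, the $L^2$-convergence of $U_\Delta$ to $\xi$ from \eqref{e9}, and the uniform integrability furnished by the moment bounds \eqref{e5.2} give, after dividing by $\Delta$, the limit $\E[\phi_x(X(t),\alpha(t))\xi(t)]$, with $\xi$ solving \eqref{eq-xi}. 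The term carrying $\1_{\{\tau^\Delta\le t\}}$ is where continuous-state dependence intervenes, and it is the main obstacle.

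For the after-split term I would condition at $\tau^\Delta$ by the strong Markov property, rewriting it as $\E[\1_{\{\tau^\Delta\le t\}}(u(t-\tau^\Delta,\wdt X(\tau^\Delta),\wdt\alpha(\tau^\Delta))-u(t-\tau^\Delta,X(\tau^\Delta),\alpha(\tau^\Delta)))]$, and then splitting the inner difference into a continuous part (common discrete state $\wdt\alpha(\tau^\Delta)$, continuous arguments $\wdt X(\tau^\Delta)$ versus $X(\tau^\Delta)$) and a discrete part (common continuous argument $X(\tau^\Delta)$, discrete states $\wdt\alpha(\tau^\Delta)$ versus $\alpha(\tau^\Delta)$). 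The continuous part, using that $u(s,\cdot,j)$ is Lipschitz in $x$ with a polynomially growing constant together with $\E\sup_{t\le\tau^\Delta}|X-\wdt X|^2\le K|\Delta|^2$ from \eqref{e3} and $\PP\{\tau^\Delta\le t\}=O(|\Delta|)$ from \eqref{e8} (here $\lambda=1$), is $O(|\Delta|^{1/2})\to0$. The discrete part does \emph{not} vanish: applying the first-jump (L\'evy system) formula that already underlies \eqref{e6} and the coupling rates in \eqref{eq:coupled-Q}, the split contribution at rate level equals $\sum_j\big(u(t-s,X(s),j)-u(t-s,X(s),\alpha(s))\big)\big(q_{\alpha(s)j}(\wdt X(s))-q_{\alpha(s)j}(X(s))\big)$ after the elementary cancellation $a^{+}-(-a)^{+}=a$; dividing by $\Delta$, using $q_{kj}(\wdt X)-q_{kj}(X)\approx q_{kj}'(X)(\wdt X-X)$, the convergence $(\wdt X-X)/\Delta\to\xi$, and the row-sum identity $\sum_j q_{kj}'=0$, this converges to $\E\big[\int_0^t\sum_j q_{\alpha(s)j}'(X(s))\,u(t-s,X(s),j)\,\xi(s)\,ds\big]$. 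Hence $\partial_x u$ exists and equals the sum of the two limits.

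A preliminary step, needed so that the continuous after-split term really vanishes, is the Lipschitz estimate on $u$ itself. I would obtain it by the same before/after-split decomposition, but crucially bound the whole after-split $u$-difference by the sum of its polynomially growing absolute values and extract smallness \emph{only} from the split rate: by the first-jump formula this term is at most $\E\int_0^{t\wedge\tau^\Delta}\rho(X(r),\wdt X(r),\alpha(r))\,O(1)\,dr$, and since $\rho\le K_1(1+|X|^{\gamma_1}+|\wdt X|^{\gamma_1})|X-\wdt X|=O(|\Delta|)$ by \eqref{qlip} this is $O(|\Delta|)$. A naive Cauchy--Schwarz bound would give only $O(|\Delta|^{1/2})$ (H\"older-$1/2$), which is insufficient; the L\'evy system is what makes the estimate sharp and, by bounding the full $u$-difference rather than its continuous increment, avoids the apparent circularity of needing $u$ Lipschitz to control its own after-split term. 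The second derivative is then obtained by differentiating the integral representation for $\partial_x u$ once more: $u$ is now $C^1$ with $u$ and $\partial_x u$ of polynomial growth, $\xi$ is itself differentiable with a second-order linearized flow by the twice-differentiability part of Theorem \ref{thm-1}, and the hypotheses supply $C^2$ coefficients $b,\sigma,q,\phi$ with polynomially bounded derivatives, so that differentiation under the expectation is justified by these moment and growth bounds via dominated convergence, yielding $\partial_x^2 u$. Continuity of $\partial_x u$ and $\partial_x^2 u$ in $x$ follows from continuous dependence of $X^{x,\alpha}$ and $\xi$ on $x$ and dominated convergence with the polynomial dominators from \eqref{e5.2}, so that $u\in C^2$ as claimed.
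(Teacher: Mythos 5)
Your route is genuinely different from the paper's. The paper does not use the coupling for this theorem at all: it performs a change of measure (following Eizenberg--Freidlin) onto an auxiliary chain $\chi$ with the constant generator $\hat q_{ij}=1$, writes $u$ as the series \eqref{e4.4} over the number of jumps and the jump sequences, with all the state dependence of $Q$ pushed into a multiplicative density $\exp\{-\int_0^T q_{\chi(s)}(Z(s))ds\}\prod_k q_{i_ki_{k+1}}(Z(\theta_{k+1}))$ evaluated along the diffusion $Z$ of \eqref{e4.3}, whose switching law no longer depends on $x$. Differentiation then reduces to the classical $L^2$-differentiability of $Z$ plus term-by-term Taylor expansion of the density, the error series being summable because the Poisson tail $e^{-m_0T}(m_0T)^n/n!$ beats $M^n$. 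You instead keep the coupling \eqref{e2.3}, split at $\tau^\Delta$, and extract the derivative of the switching law through the first-jump compensator identity together with the cancellation $[a]^+-[-a]^+=a$ in \eqref{eq:coupled-Q}, arriving at a Duhamel-type formula in which $u$ itself reappears under the integral. Your first-order formula is correct --- it can be checked explicitly on the example of Section \ref{sec:c-exm} with $\phi(y,i)=y$ --- and the preliminary Lipschitz estimate on $u$ you need is essentially Theorem \ref{prop4.2}, so the first-derivative half of your argument is sound, modulo stating the compensator identity for predictable integrands and justifying the limit in the compensator integral via \eqref{e3} and uniform integrability. What each approach buys: the paper's likelihood-ratio representation makes repeated differentiation mechanical because the reference measure is $x$-independent; yours is more probabilistic and yields a closed-form sensitivity identity, but at the price of a recursive structure.

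The genuine gap is the second derivative. You assert it is obtained by ``differentiating the integral representation for $\partial_x u$ once more,'' justified ``by dominated convergence.'' But the expectation in your representation is still taken under the law of $(X^{x},\alpha^{x},\xi^{x})$, whose discrete component depends on $x$ through $Q(X^x(\cdot))$ --- precisely the obstruction that forced the before/after-split analysis in the first place; dominated convergence alone never lets you differentiate such an expectation in $x$. You would have to rerun the coupling and first-disagreement argument for the new functional $\phi_x(X(t),\alpha(t))\xi(t)+\int_0^t\sum_j q'_{\alpha(s)j}(X(s))\,u(t-s,X(s),j)\,\xi(s)\,ds$, and its after-split ``discrete part'' involves differences across discrete states of conditional expectations of \emph{this} functional, which are new functions rather than $u(\cdot,\cdot,j)$; each must be shown Lipschitz in $x$ before your cancellation argument applies again. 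This bootstrap is plausible but is not supplied by your one sentence. (The paper's own closing sentence on the second derivative is also terse, but there it is defensible: after the change of measure the reference law genuinely does not depend on $x$, so a second-order Taylor expansion of the explicit density is all that remains.)
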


\begin{proof}
Again, for simplicity, we work out the details for the one-dimensional case.
Denote by $ (\partial / \partial x) \phi(\cdot,i)$ for each $i\in \M$, and $ (d/dx) q_{ij}(\cdot)$ the derivatives of $\phi(\cdot,i)$ and $q_{ij}(\cdot)$ with respect to $x$, respectively.
Let $\chi(t)$ be the Markov chain in $\M$ with generators
$\hat q_{ij}=1$ if $i\ne j$, $\hat q_{ii}=-m_0+1$.
Let $\theta^{i_0}_n$ be the $n$-th jump time of $\chi(t)$ given that $\chi(0)=i_0$.
Let $Z^{x_0,i_0}(t)$ be the solution with initial value $Z^{x_0,i_0}(0)=x_0$ to
\begin{equation}\label{e4.3}
dZ(t)=b(Z(t), \chi(t))dt+\sigma(Z(t),\chi(t))dw(t).\end{equation}
Let $\tau^{x_0,i_0}_n$ be the $n$-th jump time of $\alpha^{x_0,i_0}(t)$.
By a change of measure (see \cite{EF-1}),
we have
\begin{equation}\label{e4.4}
\begin{aligned}
\E\Big[ \phi&(X^{x_0,i_0}(T),\alpha^{x_0,i_0}(T))\1_{\{\tau^{x_0,i_0}_n\leq T<\tau^{x_0,i_0}_{n+1}\}}\prod_{k=1}^n\1_{\{\alpha^{x_0,i_0}(\tau_{k}^{x_{0},i_{0}})=i_k\}}\Big]\\
=&\exp((m_0-1)T)\E\Big[ \phi(Z^{x_0,i_0}(T),i_n)\1_{\{\theta^{i_0}_n\leq T<\theta^{i_0}_{n+1}\}}\Big(\prod_{k=1}^n\1_{\{\chi(\theta^{i_0}_{k})=i_k\}}\Big)\\
&\qquad\qquad\qquad\qquad\quad\times \exp\Big\{-\int_{0}^Tq_{\chi(s)}(Z^{x_0,i_0}(s))ds\Big\}\prod_{k=0}^{n-1}\Big(q_{i_k i_{k+1}}
(Z^{x_0,i_0}({\theta^{i_{0}}_{k+1}}))\Big)\Big],
\end{aligned}
\end{equation}
where $q_i (x)= \sum_{j\not = i} q_{ij} (x)$.
Now, fix $x$ and $\alpha=i_0$
and let
 $(X(t),\alpha(t))$,  $(\wdt  X(t),\wdt  \alpha(t))$ be the switching-diffusion processes satisfying \eqref{sde} and \eqref{tran} with
initial conditions $(x,\alpha)$ and $(\wdt  x,\alpha)$, respectively, where $\wdt x=x+\Delta, \alpha=i_0$. Likewise,
let $Z(t), \wdt Z(t)$ be the solutions to \eqref{e4.3} with initial conditions
$x,\wdt x$ respectively.
By standard arguments (e.g. \cite[Lemma 3.3]{MaoY} and \cite[Lemma 3.2]{YZ}), we have
\begin{equation}\label{e4.4a}
\begin{cases}
\E\sup\limits_{t\in[0,T]}|\wdt Z(t)-Z(t)|^4\leq K\Delta^4\text{ for some }K>0,\\
\E\sup\limits_{t\in[0,T]}|Z(t)|^m\leq K_m(1+|x|^m) \text{ for any } m>0.
\end{cases}
\end{equation}
Adapting the proofs of \cite[Theorems 5.5.2, 5.5.3]{AF} with a slight modification in replacing fixed times by stopping times,
we can show that
\begin{equation}\label{e4.4b}
\mu_\Delta:=\sup\limits_{\{\vartheta\in\mathcal T\}}\dfrac{\E |\wdt Z(\vartheta)-Z(\vartheta)-\eta(\vartheta)\Delta|^2}{\Delta^2}\to0\text{ as } \Delta\to0,
\end{equation}
where $\eta(t)$ is the derivative of $Z(t)$ with respect to the initial condition $x$ which exists in $L^2$-sense
and
$\mathcal T$ is the set of all stopping times that are bounded above by $T$ almost surely.
Note that $\eta(t)$ satisfies
$$\eta(t)=1+\int_0^t\eta(s)b_x(Z(s),\chi(s))ds+\int_0^t\eta(s)\sigma_x(Z(s),\chi(s))dw(s).$$
Using the above equation, \eqref{e4.4a}, the Burkholder-Davis-Gundy inequality, Gronwall's inequality and standard arguments in \cite[Lemma 3.2]{YZ} or \cite[Theorem 5.2.2]{AF},
we can show that
\begin{equation}\label{e4.4c}
\E\sup\limits_{t\in[0,T]}|\eta(t)|^2\leq C(T,x),
\end{equation}
where $C(T,x)$ is a constant depending on $T$ and $x$.
By Taylor's expansion, it can be easily shown that
\begin{equation}\label{e4.5}
\begin{aligned}
\bigg|&\exp\Big\{-\int_{0}^Tq_{\chi(s)}(\wdt Z_s)ds\Big\}-\exp\Big\{-\int_{0}^Tq_{\chi(s)}(Z_s)ds\Big\}\\
&\qquad +\left(\int_{0}^{T}(\wdt Z(s)-Z(s)){d\over dz}q_{\chi(s)}(Z(s))
ds\right)\exp\Big\{-\int_{0}^{T}q_{\chi(s)}(Z(s))ds\Big\}\bigg|\\
&\leq M\int_{0}^{T}|\wdt Z(s)-Z(s)|^2ds.
\end{aligned}
\end{equation}In view of \eqref{e4.5}
\begin{equation}\label{e4.6}
\begin{aligned}
\bigg|&\dfrac1\Delta\left(\exp\Big\{-\int_{0}^Tq_{\chi(s)}(\wdt Z_s)ds\Big\}-\exp\Big\{-\int_{0}^Tq_{\chi(s)}(Z_s)ds\Big\}\right)\\
&\qquad +\left(\int_{0}^{T}\eta(s)
{d \over dz} q_{\chi(s)}
(Z(s))ds\right)\exp\Big\{-\int_{0}^{T}q_{\chi(s)}(Z(s))ds\Big\}\bigg|\\
&\leq
\dfrac1\Delta\bigg|\exp\Big\{-\int_{0}^Tq_{\chi(s)}(\wdt Z_s)ds\Big\}-\exp\Big\{-\int_{0}^Tq_{\chi(s)}(Z_s)ds\Big\}\\
&\quad\qquad+\left(\int_{0}^{T}(\wdt Z(s)-Z(s)){d\over dz}q_{\chi(s)}(Z(s))
ds\right)\exp\Big\{-\int_{0}^{T}q_{\chi(s)}(Z(s))ds\Big\}\bigg|\\
&\quad+\bigg|\left(\int_{0}^{T}\Big[\eta(s)-\dfrac{\wdt Z(s)-Z(s)}\Delta\Big]{d\over dz}q_{\chi(s)}(Z(s))
ds\right)\exp\Big\{-\int_{0}^{T}q_{\chi(s)}(Z(s))ds\Big\}\bigg|\\
&\leq
\dfrac{M}\Delta\int_{0}^{T}\left(|\wdt Z(s)-Z(s)|^2+|\wdt Z(s)-Z(s)-\Delta\eta(s)|\right)ds.\end{aligned}
\end{equation}
We  have from Taylor's expansion
\begin{equation}\label{e4.7} \begin{aligned} \bigg|&\dfrac{q_{i_ki_{k+1}}(\wdt Z(\theta_{k+1}))-q_{i_ki_{k+1}}( Z(\theta_{k+1}))}\Delta-\eta(\theta_{k+1}) {d \over dz} q_{i_ki_{k+1}}( Z(\theta_{k+1})) \bigg|\\ &\leq \dfrac1\Delta\left|q_{i_ki_{k+1}}(\wdt Z(\theta_{k+1}))-q_{i_ki_{k+1}}( Z(\theta_{k+1}))- \Big(\wdt Z(\theta_{k+1})-Z(\theta_{k+1})\Big) {d \over dz} q_{i_ki_{k+1}}( Z(\theta_{k+1})) \right|\\ &\quad+\dfrac1\Delta\left|\wdt Z(\theta_{k+1})-Z(\theta_{k+1})-\Delta\eta(\theta_{k+1})\right|\left|{d \over dz} q_{i_ki_{k+1}}( Z(\theta_{k+1})) \right|\\ &\leq \dfrac{M}\Delta\left(|\wdt Z(\theta_{k+1})-Z(\theta_{k+1})|^2+|\wdt Z(\theta_{k+1})-Z(\theta_{k+1})-\Delta\eta(\theta_{k+1})|\right). \end{aligned} \end{equation}Similar to \cite[(5.22) on p.123]{AF},
we  have
\begin{equation}\label{e4.8}
\begin{aligned}
\psi_\Delta:=\bigg|&\dfrac{\phi(\wdt Z(T),\chi(T))-\phi(Z(T),\chi(T))}\Delta-\eta(T)
{\partial \over \partial z} \phi(Z(T),\chi(T))
\bigg|\to 0 \text{ in } L^2(\Omega) \text{ as }\Delta\to0.
\end{aligned}
\end{equation}

To simplify  notation, we drop the superscript of $\theta_n^{i_0}$ and for each $n\in\N$,
let $$ A_n:=\{(i_1,\dots,i_{n+1}): i_k\in\M, i_k\ne i_{k-1},k=1,\dots,n+1\}.$$
For each $\I=(i_1,\dots,i_{n+1})\in A_n$,
let
$$f_\I:=\1_{\{\theta_n\leq T<\theta_{n+1}\}}\Big(\prod_{k=1}^n\1_{\{\chi(\theta_k)=i_k\}}\Big).$$
We compute
\begin{equation}\label{e4.9}
\begin{aligned}
\Bigg|\dfrac1\Delta\Big[ &\phi(\wdt Z(T),i_n)f_\I \exp\Big\{-\int_{0}^{T}q_{\chi(s)}(\wdt Z(s))ds\Big\}\prod_{k=0}^{n-1}q_{i_ki_{k+1}}
(\wdt Z(\theta_{k+1}))\\
&-\phi( Z(T),i_n)f_\I \exp\Big\{-\int_{0}^{T}q_{\chi(s)}(Z(s))ds\Big\}\prod_{k=0}^{n-1}q_{i_ki_{k+1}}
(Z(\theta_{k+1}))\Big]\\
& -\eta(T)
{\partial \over \partial z}\phi(Z(T),i_n)
f_\I \exp\Big\{-\int_{0}^{T}q_{\chi(s)}(Z(s))ds\Big\}\prod_{k=0}^{n-1}q_{i_ki_{k+1}}
(Z(\theta_{k+1}))\\
& -\sum_{j=0}^{n-1}\phi( Z(T),i_n)f_\I \exp\Big\{-\int_{0}^{T}q_{\chi(s)}(Z(s))ds\Big\}\eta(\theta_{j+1})
{d\over dz} q_{i_ji_{j+1}}(Z(\theta_{j+1}))\\
& \quad \hfill \times\prod_{k=0,k\ne j}^{n-1}q_{i_ki_{k+1}}
(Z(\theta_{k+1}))\\
& +\phi( Z(T),i_n)f_\I \exp\Big\{-\int_{0}^{T}q_{\chi(s)}(Z(s))ds\Big\}\Big(\int_{0}^{T}\eta(s)
{d\over dz} q_{\chi(s)}(Z(s))
ds\Big) \\
& \quad \hfill \times \prod_{k=0}^{n-1}q_{i_ki_{k+1}}
(Z(\theta_{k+1}))\Bigg|\\
&\!\!\!\!\leq Cf_\I M^n\bigg[\psi_\Delta+(1+|\wdt Z(T)|^\gamma+|Z(T)|^\gamma)\int_0^T\dfrac{|\wdt Z(s)-Z(s)|^2+|\wdt Z(s)-Z(s)-\Delta\eta(s)|}\Delta ds\\
& +(1+|\wdt Z(T)|^\gamma+|Z(T)|^\gamma)\sum_{j=0}^n\dfrac{|\wdt Z(\theta_{j+1})-Z(\theta_{j+1})|^2+|\wdt Z(\theta_{j+1})-Z(\theta_{j+1})-\Delta\eta(\theta_{j+1})|}\Delta\biggr].
\end{aligned}
\end{equation}
Let $\hat\zeta_n(T)$ be defined by
$$
\begin{aligned}
\hat\zeta_n(T)=&\sum_{\I\in A_n}\bigg[\eta(T)
{\partial \over \partial z}\phi(Z(T),i_n)
f_\I \exp\Big\{-\int_{0}^{T}q_{\chi(s)}(Z(s))ds\Big\}\prod_{k=0}^{n-1}q_{i_ki_{k+1}}
(Z(\theta_{k+1}))\\
&+\sum_{j=0}^{n-1}\phi( Z(T),i_n)f_\I \exp\Big\{-\int_{0}^{T}q_{\chi(s)}(Z(s))ds\Big\}\eta(\theta_{j+1})
{d\over dz}q_{i_ji_{j+1}}(Z(\theta_{j+1}))\\
&\qquad\hfill \times
\prod_{k=0,k\ne j}^{n-1}q_{i_ki_{k+1}}
(Z(\theta_{k+1}))\\
&-\phi( Z(T),i_n)f_\I \exp\Big\{-\int_{0}^{T}q_{\chi(s)}(Z(s))ds\Big\}\Big(\int_{0}^{T}\eta(s)
{d\over dz} q_{\chi(s)}(Z(s))
ds\Big)\\
& \qquad \hfill \times \prod_{k=0}^{n-1}q_{i_ki_{k+1}}
(Z(\theta_{k+1}))\bigg].
\end{aligned}
$$ Note that $\sum_{\I\in A_n} f_{\I} =1$.
It then follows from \eqref{e4.9} that for each $n\in\N$,
$\hat\zeta_n(t)$ such that
\begin{equation}\label{e4.10}
\begin{aligned}
\E\sum_{\I\in A_n}&\Bigg|\dfrac1\Delta\Big[ \phi(\wdt Z(T),i_n)f_\I\exp\Big\{-\int_{0}^{T}q_{\chi(s)}(\wdt Z(s))ds\Big\}\prod_{k=0}^{n-1}q_{i_ki_{k+1}}
(\wdt Z(\theta_{k+1}))\\
&-\phi( Z(T),i_n)f_\I \exp\Big\{-\int_{0}^{T}q_{\chi(s)}( Z(s))ds\Big\}\prod_{k=0}^{n-1}q_{i_ki_{k+1}}
(Z(\theta_{k+1}))\Big]-\hat\zeta_n(T)\Bigg|\\
\leq& M^n\E \1_{\{\theta_n\leq T<\theta_{n+1}\}}\psi_\Delta\\
&+\dfrac{C}\Delta M^n\E\1_{\{\theta_n\leq T<\theta_{n+1}\}}(1+|\wdt Z(T)|^\gamma+|Z(T)|^\gamma)\\
& \qquad \qquad \times \int_0^T\Big(|\wdt Z(s)-Z(s)|^2+|\wdt Z(s)-Z(s)-\Delta\eta(s)|\Big)ds\\
&+\dfrac{C}\Delta M^n\E\1_{\{\theta_n\leq T<\theta_{n+1}\}}(1+|\wdt Z(T)|^\gamma+|Z(T)|^\gamma)\\
&\qquad\qquad\times\sum_{j=0}^n\left(|\wdt Z(\theta_{j+1})-Z(\theta_{j+1})|^2+|\wdt Z(\theta_{j+1})-Z(\theta_{j+1})-\Delta\eta(\theta_{j+1})|\right).
\end{aligned}
\end{equation}
By H\"older's inequality and the fact that
\begin{equation}\label{e4.10a}
\PP\{\theta_n\leq T<\theta_{n+1}\}=\dfrac{\exp(-m_0T)(m_0T)^n}{n!},
\end{equation}
we can obtain the  following
estimate:
\begin{equation}\label{e4.11}
\E \1_{\{\theta_n\leq T<\theta_{n+1}\}}\psi_\Delta\leq \Big(\PP\{\theta_n\leq T<\theta_{n+1}\}\E\psi^2_\Delta\Big)^{\frac12}
\leq \left(\dfrac{\exp(-m_0T)(m_0T)^n}{n!}\right)^{\frac12}(\E\psi^2_\Delta)^{\frac12}.
\end{equation}
Similarly, with the aid of H\"older's and Minkowski's inequalities, we have
\begin{equation}
\begin{aligned}
 \dfrac{1}\Delta& \E\1_{\{\theta_n\leq T<\theta_{n+1}\}}(1+|\wdt Z(T)|^\gamma+|Z(T)|^\gamma)\int_0^T\Big(|\wdt Z(s)-Z(s)|^2+|\wdt Z(s)-Z(s)-\Delta\eta(s)|\Big)ds\\
 \leq&  \Big(\PP\{\theta_n\leq T<\theta_{n+1}\}\Big)^{\frac14}\Big(1+(\E|\wdt Z(T)|^{4\gamma})^{\frac14}+(\E|Z(T)|^{4\gamma})^{\frac14}\Big)\\
 &\times\dfrac1\Delta\left[\int_0^T\Big(\E|\wdt Z(s)-Z(s)|^4\Big)^{\frac12} ds+\int_0^T\Big(\E|\wdt Z(s)-Z(s)-\Delta\eta(s)|^2\Big)^{\frac12} ds\right)\\
 \leq&K \left(\dfrac{\exp(-m_0T)(m_0T)^n}{n!}\right)^{\frac14}(\Delta+\mu_\Delta^{\frac12})  \text{ (by \eqref{e4.4a} and \eqref{e4.4b}) }
\end{aligned}\label{e4.12}
\end{equation}
Likewise,
\begin{equation}\label{e4.13}
\begin{aligned}
\dfrac{1}\Delta \E\bigg[&\1_{\{\theta_n\leq T<\theta_{n+1}\}}(1+|\wdt Z(T)|^\gamma+|Z(T)|^\gamma)\\
&\times\sum_{j=0}^n\left(|\wdt Z(\theta_{j+1})-Z(\theta_{j+1})|^2+|\wdt Z(\theta_{j+1})-Z(\theta_{j+1})-\Delta\eta(\theta_{j+1})|\right)\bigg]\\
\leq&nK \left(\dfrac{\exp(-m_0T)(m_0T)^n}{n!}\right)^{\frac14}(\Delta+\mu_\Delta^{\frac12}).
\end{aligned}
\end{equation}
Applying \eqref{e4.11}, \eqref{e4.12} and \eqref{e4.13} to \eqref{e4.10} to obtain
\begin{equation}\label{e4.14}
\begin{aligned}
\E\sum_{\I\in A_n}&\Bigg|\dfrac1\Delta\Big[ \phi(\wdt Z(T),i_n)f_\I\exp\Big\{-\int_{0}^{T}q_{\chi(s)}(\wdt Z(s))ds\Big\}\prod_{k=0}^{n-1}q_{i_ki_{k+1}}
(\wdt Z(\theta_{k+1}))\\
&-\phi( Z(T),i_n)f_\I \exp\Big\{-\int_{0}^{T}q_{\chi(s)}( Z(s))ds\Big\}\prod_{k=0}^{n-1}q_{i_ki_{k+1}}
(Z(\theta_{k+1}))\Big]-\hat\zeta_n(T)\Bigg|\\
&\leq (n+1)\wdt K M^n\left(\dfrac{\exp(-m_0T)(m_0T)^n}{n!}\right)^{\frac14}(\mu_\Delta^{\frac12}+(\E\psi^2_\Delta)^{\frac12}+\Delta)
\end{aligned}
\end{equation}
for some $\wdt K$ independent of $n$.
It is not difficult to show that
\begin{equation}\label{e4.14a}
\sum_{n=0}^\infty (n+1)\wdt K M^n\left(\dfrac{\exp(-m_0T)(m_0T)^n}{n!}\right)^{\frac14}<\infty.
\end{equation}
Moreover,  by virtue of \eqref{e4.4b} and \eqref{e4.8}, we have
$$\lim\limits_{\Delta\to0}\Big(\mu_\Delta^{\frac12}+(\E\psi^2_\Delta)^{\frac12}+\Delta\Big)=0.$$
Thus,
\begin{equation}\label{e4.15}
\begin{aligned}
\E\sum_{n=0}^\infty \Bigg|\sum_{\I\in A_n}& \dfrac1\Delta\Big[  \phi(\wdt Z(T),i_n)f_\I\exp\Big\{-\int_{0}^{T}q_{\chi(s)}(\wdt Z(s))ds\Big\}\prod_{k=0}^{n-1}q_{i_ki_{k+1}}
(\wdt Z(\theta_{k+1}))\\
&- \phi( Z(T),i_n)f_\I\exp\Big\{-\int_{0}^{T}q_{\chi(s)}( Z(s))ds\Big\}\prod_{k=0}^{n-1}q_{i_ki_{k+1}}
( Z(\theta_{k+1}))\Big]-\hat\zeta_n(T)\Bigg|\to0
\end{aligned}
\end{equation} as  $\Delta\to0$.

Similar to \eqref{e4.14} and \eqref{e4.14a}, by using Holder's inequality, \eqref{e4.4c},
and \eqref{e4.10a}, we can obtain that
\begin{equation}\label{e4.16}
\begin{aligned}
\E \biggl|\sum_{n=0}^\infty \hat\zeta_n(T)\biggr|\leq \hat C(T,x)<\infty.
\end{aligned}
\end{equation}
As a result of \eqref{e4.15} and \eqref{e4.16}, we deduce
\begin{equation}\label{e4.17}
\begin{aligned} \frac{1}{\Delta} & [u(T,x+\Delta, i) - u(T,x,i)]\\ & =
\dfrac1\Delta \E\Big(\phi(\wdt X(T),\wdt\alpha(T))-\phi(X(T),\alpha(T))\Big)\\
& =
\dfrac1\Delta\E\sum_{n=0}^\infty\sum_{\I\in A_n}\Big[ \phi(\wdt Z(T),i_n)f_\I\exp\Big\{-\int_{0}^{T}q_{\chi(s)}(\wdt Z(s))ds\Big\}\prod_{k=0}^{n-1}q_{i_ki_{k+1}}
(\wdt Z(\theta_{k+1}))\\
&\qquad\qquad\qquad -\phi(Z(T),i_n)f_\I\exp\Big\{-\int_{0}^{T}q_{\chi(s)}( Z(s))ds\Big\}\prod_{k=0}^{n-1}q_{i_ki_{k+1}}
(Z(\theta_{k+1}))
\Big]\\
&\to\E \sum_{n=0}^\infty \hat\zeta_n(T)\text{ as }\Delta\to0.
\end{aligned}
\end{equation}
We have therefore  proved that $u(t,x,i)$ is differentiable with respect to $x$ with $$\frac{\partial}{\partial x} u(t,x,i)  = \E_{x,i}\sum_{n=0}^\infty \hat\zeta_n(T).$$
With Taylor's expansion up to the second order  and using the same method,
we can show that $u(t,x,i)$ is twice differentiable with respect to $x$.
\end{proof}

\begin{rem}
{\rm  If \eqref{e.5bd}
holds for any $\beta\leq n$,
then we can use a change of measure argument,
 Taylor's expansion,
and arguments in the proof of Theorem \ref{thm5.2}
to obtain differentiability up to order $n$ of
$u(t,x,i)$.
However, the estimates would be more complicated.
}
\end{rem}

\section{Feller Property under Non-Global Lipschitz Condition}\label{sec:fel}
This section is devoted to obtaining Feller properties of switching diffusions under non-Lipschitz condition.
There has been much work on Feller properties of switching diffusions in the literature. However, to the best of our knowledge, all of the work up to date has been concentrated on the case under a global Lipschitz condition; see for example,
\cite{Xi08,Xi09,YinZ}. When the global Lipschitz condition is violated, will the processes still possess Feller property?
We address this issue in what follows.

\begin{thm}\label{Feller}
Assume that the hypothesis of Theorem \ref{exist-unique} is satisfied.
Then the solution process $(X(t),\alpha(t))$
for the system given by \eqref{sde} and \eqref{tran}
is a Markov-Feller process.
\end{thm}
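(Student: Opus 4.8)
The statement asserts two properties, the \emph{Markov} property and the \emph{Feller} property. The former is already built into the construction of $(X,\al)$ as the unique strong solution of \eqref{sde}--\eqref{eq:ju} guaranteed by Theorem \ref{exist-unique}, so the entire content is the Feller property: for every $g\in C_b(\R^r\times\M)$ and every $t>0$ the map $(x,i)\mapsto P_tg(x,i):=\E\big[g(X^{x,i}(t),\al^{x,i}(t))\big]$ is bounded and continuous. Boundedness is immediate from $|P_tg|\le\norm{g}_\infty$, and since $\M$ is finite, continuity in the discrete variable is automatic; thus it suffices to prove continuity in $x$. The plan is to proceed in two stages: first establish the claim when $b(\cdot,i),\sigma(\cdot,i)$ are \emph{globally} Lipschitz, and then remove this restriction by a localization argument that exploits the Lyapunov function supplied by Theorem \ref{exist-unique}.

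\textbf{Global Lipschitz stage.} Fix $x$, write $\wdt x=x+\Delta$, and run the two solutions through the basic coupling \eqref{e2.3}, recalling the first disagreement time $\tau^\Delta$ from \eqref{eq:tau-d-def}. I would split
$$\big|P_tg(\wdt x,i)-P_tg(x,i)\big|\le \E\Big[\1_{\{\tau^\Delta>t\}}\big|g(\wdt X(t),\al(t))-g(X(t),\al(t))\big|\Big]+2\norm{g}_\infty\,\PP\{\tau^\Delta\le t\}.$$
On $\{\tau^\Delta>t\}$ the discrete components coincide, $\al(t)=\wdt\al(t)$, while \eqref{e3} and \eqref{e4} give $\sup_{s\le t}|X(s)-\wdt X(s)|\to0$ in probability as $\Delta\to0$; since $g$ is bounded and continuous, bounded convergence forces the first term to $0$. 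The second term vanishes by \eqref{e7}, which requires only that $Q$ be bounded and continuous. Hence $P_tg$ is continuous in $x$ in the globally Lipschitz case (this is the alternative proof promised in the introduction; under the stronger Lipschitz hypothesis \eqref{qlip} on $Q$ it can also be read off directly from the quantitative estimate \eqref{e5.1} of Theorem \ref{prop4.2}).

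\textbf{Localization stage.} Under the hypotheses of Theorem \ref{exist-unique} the coefficients are only locally Lipschitz. For each $n$ let $\pi_n$ denote the nonexpansive radial projection onto the closed ball $\{|x|\le n\}$, and set $b_n(\cdot,i)=b(\pi_n(\cdot),i)$, $\sigma_n(\cdot,i)=\sigma(\pi_n(\cdot),i)$; these are globally Lipschitz and agree with $b,\sigma$ on $\{|x|\le n\}$, while $Q$ is left unchanged. Denote by $P^{(n)}_t$ the semigroup of the resulting switching diffusion, which is Feller by the first stage. By pathwise uniqueness the truncated and original processes coincide up to the exit time $\beta_n=\inf\{t\ge0:|X^{x,i}(t)|\ge n\}$, so
$$\big|P_tg(x,i)-P^{(n)}_tg(x,i)\big|\le 2\norm{g}_\infty\,\PP\{\beta_n\le t\}.$$
Applying Dynkin's formula to $e^{-Ks}V$ and using $\mathcal L V\le KV$ yields $\E\big[e^{-K(t\wedge\beta_n)}V(X(t\wedge\beta_n),\al(t\wedge\beta_n))\big]\le V(x,i)$, whence
$$\PP\{\beta_n\le t\}\le \frac{e^{Kt}V(x,i)}{V_n},\qquad V_n=\inf_{|x|\ge n,\,i\in\M}V(x,i)\to\infty.$$
Because $V$ is continuous, $V(x,i)$ is bounded on compacts, so $P^{(n)}_tg\to P_tg$ uniformly on compact sets of $x$; being a locally uniform limit of continuous functions, $P_tg$ is continuous, which completes the proof.

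\textbf{Main obstacle.} The delicate point is the localization stage: one must upgrade the pointwise bound on $\PP\{\beta_n\le t\}$ into control that is \emph{uniform over $x$ in a neighborhood}, since only then does the convergence $P^{(n)}_tg\to P_tg$ preserve continuity. This is exactly where the coercivity $V_R\to\infty$ together with the local boundedness of $V$ is essential. A secondary subtlety, already central in Section \ref{sec:diff}, is that the continuous-state dependence rules out the naive comparison of subtracting one solution from the other; the coupling \eqref{e2.3} and the stopping time $\tau^\Delta$ are precisely what make the first stage go through even though $\al^{x,i}$ and $\al^{\wdt x,i}$ may separate infinitely often.
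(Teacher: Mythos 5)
Your proof is correct and follows essentially the same two-stage strategy as the paper: first the Feller property under global Lipschitz coefficients via the coupling \eqref{e2.3}, the stopping time $\tau^\Delta$, and the estimates \eqref{e4} and \eqref{e7}, then a localization to remove the global Lipschitz hypothesis. The only cosmetic differences are that the paper truncates by multiplying the coefficients by a smooth cutoff $\psi$ and quotes the non-explosion bound \eqref{e-regular} from \cite[Theorem 2.7]{YinZ}, whereas you truncate by composing with a radial projection and re-derive the exit-time bound directly from the Lyapunov condition via Dynkin's formula; both yield the same uniform-over-compacts control that makes the localization work.
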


\begin{proof}
The Markov property follows from standard arguments.
In what follows, we focus on
the proof of the Feller property. First, we suppose that
$b(x,i)$ and $\sigma(x,i)$ are Lipschitz  in $x$ for each $i\in\M$.
In fact, the Feller property was obtained in \cite[Section 2.5]{YinZ} under global Lipschitz condition. The proof was   rather long.
Here, using our results of the current paper, we provide an alternative proof.
Let $f(\cdot,\cdot):\R^r\times\M\mapsto\R$ be a bounded and continuous function.
Fix $(x,\alpha)\in \R^r\times\M$ and $t>0$. Let $\{x_n\}$ be any sequence converging to $x$ as $n\to\infty$.
By the definition of the limit superior of a sequence,
since $f(\cdot,\cdot)$ is bounded,
we can always extract a subsequence
$\{\E f(X^{x_{n_k},\alpha}(t),\alpha^{x_{n_k},\alpha}(t))\}$ from $\{\E f(X^{x_{n},\alpha}(t),\alpha^{x_{n},\alpha}(t))\}$ such that
\begin{equation}\label{e.xnk}
\lim_{k\to\infty}\E f(X^{x_{n_k},\alpha}(t),\alpha^{x_{n_k},\alpha}(t))=\limsup_{n\to\infty}\E f(X^{x_{n},\alpha}(t),\alpha^{x_{n},\alpha}(t)).
\end{equation}
In view of \eqref{e4} and \eqref{e7},
$(X^{x_{n_k},\alpha}(t),\alpha^{x_{n_k},\alpha}(t))$
converges  to $(X^{x,\alpha}(t),\alpha^{x,\alpha}(t))$ in probability as $k\to \infty$. Then it has a subsequence converging almost surely to
$(X^{x,\alpha}(t),\alpha^{x,\alpha}(t))$. Without loss of generality,
we may assume that $(X^{x_{n_k},\alpha}(t),\alpha^{x_{n_k},\alpha}(t))$ converges almost surely to $(X^{x,\alpha}(t),\alpha^{x,\alpha}(t))$ as $k\to\infty$.
Then the dominated convergence theorem and \eqref{e.xnk} imply that
$$\limsup_{n\to\infty}\E f(X^{x_{n},\alpha}(t),\alpha^{x_{n},\alpha}(t))=\lim_{k\to\infty}\E f(X^{x_{n_k},\alpha}(t),\alpha^{x_{n_k},\alpha}(t))=\E f(X^{x,\alpha}(t),\alpha^{x,\alpha}(t)).$$
Likewise,
$$\liminf_{n\to\infty}\E f(X^{x_{n},\alpha}(t),\alpha^{x_{n},\alpha}(t))=\E f(X^{x,\alpha}(t),\alpha^{x,\alpha}(t)).$$
Thus, we  obtain the Feller property under the condition that $b(x,i)$ and $\sigma(x,i)$ are Lipschitz  in $x$ for each $i\in\M$.

Next we relax the condition and assume only the local Lipschitz continuity as in the statement of the theorem.
It is  proved in \cite[Theorem 2.7]{YinZ} that
for any $R>0$, $\eps>0$, and $t>0$, there is an $H_R>0$ such that
\begin{equation}\label{e-regular}
\PP\{|X^{x,\alpha}(s)|<H_R \text{ for all } s\in[0,t]\}>1-\eps \text{ if } |x|\leq R.
\end{equation}
Now fix $(x,\alpha)\in \R^r\times\M$ and $R>|x|+1$.
Let $f(\cdot,\cdot):\R^r\times\M\mapsto\R$ be a  continuous function satisfying $|f(x,\alpha)|\leq 1$ for all  $(x,\alpha) \in \rr^r \times \M$.
Let $\psi(\cdot):\R^r\mapsto[0,1]$ be a smooth function with compact support satisfying $\psi(x)=1$ if $|x|\leq H_R$.
By the first part of this proof, the process $(\hat X(t),\hat \alpha(t))$ satisfying
\begin{equation}\label{sde-hat}
\begin{cases}
d\hat X(t)=\psi(\hat X(t))b(\hat X(t), \hat \alpha(t))dt+\psi(\hat X(t))\sigma(\hat X(t),\hat \alpha(t))dw(t),\\
\PP\{\hat\alpha(t+\Delta)=j|\hat\alpha(t)=i, \hat X(s),\hat \alpha(s), s\leq t\}=q_{ij}(\hat X(t))\Delta+o(\Delta) \text{ if } i\ne j
\end{cases}
\end{equation}
has the Feller property.
Thus, there exists some $\delta\in(0,1)$ such that
\begin{equation}\label{feller-xhat}
\Big|\E f(\hat X^{x+h, \alpha}(t),\hat \alpha^{x+h,\alpha}(t))-\E f(\hat X^{x,\alpha}(t),\hat \alpha^{x,\alpha}(t))\Big|<\eps\text{ for any } h\in\R^r, |h|\leq\delta.
\end{equation}
where $(\hat X^{x, \alpha}(t),\hat \alpha^{x,\alpha}(t))$ denotes the solution of \eqref{sde-hat} with initial value $(x,\alpha)$.
By the definition of $\psi(x)$ and \eqref{e-regular},
we have that
\begin{equation}\label{e-x-xhat}
\PP\{X^{x+h,\alpha}(t)=\hat X^{x+h,\alpha}(t), \alpha^{x+h,\alpha}(t)=\hat \alpha^{x+h,\alpha}(t)\}>1-\eps \text{ if } |h|\leq 1.
\end{equation}
In view of \eqref{feller-xhat} and \eqref{e-x-xhat} and the assumption that $|f(x,\alpha)|\leq 1$ for all $(x,\alpha) \in \R^r\times \M$,
we  obtain
\begin{equation}\label{feller-x}
\Big|\E f( X^{x+h, \alpha}(t), \alpha^{x+h,\alpha}(t))-\E f( X^{x,\alpha}(t), \alpha^{x,\alpha}(t))\Big|<7\eps\text{ for any } h\in\R^r, |h|\leq\delta.
\end{equation}
The Feller property is therefore proved.
\end{proof}

\section{An Example}\label{sec:lot}
As an application of the well-posedness properties studied in the previous sections, this section deals with  a competitive Lotka-Volterra system with regime switching.
 Such a model and many of its variants were extensively investigated in the literature;
 we refer the reader to \cite{ZY09}
  and many references therein for the recent developments.

\begin{exm}\rm
Consider a stochastic competitive Lotka-Volterra model with regime switching
\begin{equation}\label{e1-ex1}
dX_i(t)=X_i(t)\left[b_i(\alpha(t))-\sum_{j=1}^ra_{ij}(\alpha(t))X_j(t)\right]dt+X_i(t)\sigma_{i}(\alpha(t))dW_i(t),\, i=1,\dots,r,
\end{equation}
where $b_i(\cdot), \sigma_i(\cdot), i\in\{1,\dots,r\}, a_{ij}(\cdot), i,j\in\{1,\dots,r\}$ are functions from $\M$ to $\R$
and $a_{ii}(k)>0, a_{ij}(k)\geq0, i,j\in\{1,\dots,r\}, k\in\M$, $W_i(t), i\in\{1,\dots,r\}$ are Brownian motions,
$\alpha(t)$ is the switching process taking value in $\M=\{1,\dots,m_0\}$
with generators $Q(x)=(q_{ij}(x))_{m_0\times m_0}$.
Assume that $q_{ij}(\cdot), i, j\in\M$ are bounded and continuous.
\eqref{e1-ex1} can be written in the matrix form
\begin{equation}\label{e2-ex1}
dX(t)=\diag(X(t))\left[b(\alpha(t))-A(\alpha(t))X(t)\right]dt+\diag(X(t))\diag(\sigma(\alpha(t))dW(t),
\end{equation}
where $b(k)=(b_1(k),\dots,b_r(k))$, $A(k)=(a_{ij}(k))_{r\times r}$, $\sigma(k)=(\sigma_1(k),\dots,\sigma_r(k))$
and
$W(t)=(W_1(t),\dots,W_r(t))$.
The model \eqref{e1-ex1} with Markovian switching, that is, when  $Q(x)$ is a constant matrix, was studied in \cite{ZY09}.
Although we are considering a more complex model with state-dependent switching, following the proofs of \cite[Theorems 2.1, 3.1]{ZY09}, we can still obtain that
\begin{itemize}
\item For any $x\in\R^{r,\circ}_+:=\{(x_1,\dots,x_r): x_i>0, i=1,\dots,r\}$,
there exists  a unique   global solution $(X^{x,\alpha}(t),\alpha^{x,\alpha}(t))$ with $X^{x,\alpha}(t)=(X_1^{x,\alpha}(t),\dots, X_r^{x,\alpha}(t))$
to \eqref{e1-ex1} with initial value $x$. Moreover,
$$\PP\{X^x(t)\in\R^{r,\circ}_+\,\forall\, t\geq0\}=1.$$
\item For any $m>0$, there exists a constant $K_m>0$  such that
\begin{equation}\label{e3-ex1}
\E|X^x(t)|^m\leq K_m(1+|x|^m)\,\text{ for all } x\in\R^{n,\circ}_+, t\geq0,
\end{equation}
where we use the norm $|x|=\sum_{i=1}^r|x_i|$ for $x=(x_1,\dots,x_r)\in\R^r$.
\end{itemize}
We aim to show that the model \eqref{e1-ex1}
satisfies the conclusions of the theorems in previous sections whose proofs rely on estimates
\eqref{e3} and \eqref{egrowth}.
Since the coefficients of \eqref{e1-ex1} is not Liptchiz, to obtain the desired results
we need to use \eqref{e3-ex1} and the following lemma.

\begin{lem}\label{lm6.2}
Let $R>0$.
For any $x,y\in\R^{r,\circ}_+$ and $|x|\vee|y|\leq R$.
\begin{equation}\label{e9-ex1}
\E\sup_{t\in[0,T]}\left\{|X^{x,\alpha}(t\wedge\tau)-X^{y,\alpha}(t\wedge\tau)|^2\right\}\leq K_{R,T}|x-y|^2.
\end{equation}
where  $K_{R,T}$
depends only on $R$ and $T$
and
$$\tau=\inf\{t\geq0: \alpha^{x,\alpha}(t)\ne\alpha^{y,\alpha}(t)\}.$$
\end{lem}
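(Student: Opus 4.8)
The plan is to mirror the derivation of the companion estimate \eqref{e3}, replacing the linear growth that was afforded there by the global Lipschitz hypothesis with the polynomial moment bounds \eqref{e3-ex1} that are available here. First I would work componentwise and observe that on the stochastic interval $[0,\tau]$ both $X^{x,\alpha}$ and $X^{y,\alpha}$ are driven by the \emph{same} switching path $\alpha(\cdot)$ and the same Brownian motions, so that $Y(t):=X^{x,\alpha}(t\wedge\tau)-X^{y,\alpha}(t\wedge\tau)$ solves a genuine (coupling-free) system with common, albeit switching, coefficients. Writing the quadratic nonlinearity through the bilinear identity $X^x_iX^x_j-X^y_iX^y_j=X^x_iY_j+Y_iX^y_j$, the $i$-th drift difference becomes $b_i(\alpha)Y_i-\sum_j a_{ij}(\alpha)\big(X^x_iY_j+X^y_jY_i\big)$ while the $i$-th diffusion difference is simply $\sigma_i(\alpha)Y_i$. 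Because $X^x,X^y\in\R^{r,\circ}_+$ and $a_{ij}\ge 0$, several contributions to the pairing $\langle Y,\mathrm{drift}\rangle$ are nonpositive (in particular the self-competition terms $-a_{ii}(X^x_i+X^y_i)Y_i^2$); what survives, together with the It\^o correction $\sum_i\sigma_i^2Y_i^2$, is dominated by $C(1+|X^x|+|X^y|)|Y|^2$ for a constant $C$ depending only on the finitely many coefficient values.

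Next I would apply It\^o's formula to $|Y(t)|^2$, take the supremum over $[0,t\wedge\tau]$ and then expectations. The stochastic integral has quadratic variation bounded by $C\int_0^{\cdot}|Y|^4\,ds$, so the Burkholder--Davis--Gundy inequality followed by Young's inequality lets me absorb one half of $\E\sup_{s\le t\wedge\tau}|Y(s)|^2$ into the left-hand side. This yields the self-improving inequality
\[
\E\sup_{s\le t\wedge\tau}|Y(s)|^2\le 2|x-y|^2+C\,\E\int_0^{t\wedge\tau}\big(1+|X^{x,\alpha}(s)|+|X^{y,\alpha}(s)|\big)|Y(s)|^2\,ds,
\]
which is the exact analogue of the bound underlying \eqref{e3}.

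The main obstacle is that, unlike in \eqref{e3}, the factor $1+|X^x|+|X^y|$ multiplying $|Y|^2$ is \emph{unbounded} (the competitive interaction destroys any global one-sided Lipschitz bound), so Gronwall's lemma does not apply directly, and a crude localization at the exit time of $\{|X^x|\vee|X^y|\le N\}$ only produces a constant that blows up with $N$. This is precisely where the hypothesis $|x|\vee|y|\le R$ and the moment bounds \eqref{e3-ex1} must be invoked, and it is the reason the final constant $K_{R,T}$ is allowed to depend on $R$. My plan is to tame the unbounded coefficient by an integrating-factor (stochastic Gronwall) argument: setting $V(t)=\exp\big(-C\int_0^t(1+|X^x|+|X^y|)\,ds\big)$, the process $V(t)|Y(t\wedge\tau)|^2$ satisfies a clean supermartingale-type inequality giving the weighted bound $\E\big[V(t\wedge\tau)|Y(t\wedge\tau)|^2\big]\le |x-y|^2$ with no loss of scaling. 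The weight is then removed by H\"older's inequality, discharging the state-dependent factor $V^{-1}$ onto the moments of $\sup_{[0,T]}|X^{x,\alpha}|$ and $\sup_{[0,T]}|X^{y,\alpha}|$ furnished (to every order) by \eqref{e3-ex1}; here the a priori finiteness of $\E\sup_{[0,T]}|Y|^{2p}$ for all $p$ is automatic, since $|Y|\le|X^x|+|X^y|$, and it legitimizes every manipulation. The delicate point I expect to require the most care is removing the weight while keeping the exponent of $|x-y|$ equal to two—rather than slipping to a spurious $|x-y|$ or picking up an additive constant—which I would secure by interpolating the weighted estimates at two different powers against the $R$-dependent moment bounds.
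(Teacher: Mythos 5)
Your argument tracks the paper's proof up to the key estimate: both write $Y(t)=X^{x,\alpha}(t\wedge\tau)-X^{y,\alpha}(t\wedge\tau)$, exploit that the two solutions share a single switching path up to $\tau$, use the bilinear bound $|\diag(x)A(k)x-\diag(y)A(k)y|\le C(|x|+|y|)|x-y|$, and control the martingale part by Burkholder--Davis--Gundy, arriving at an inequality of the form $\E\sup_{s\le t\wedge\tau}|Y|^2\le C|x-y|^2+C\,\E\int_0^{t\wedge\tau}(1+|X^{x,\alpha}|+|X^{y,\alpha}|)|Y|^2\,ds$. The divergence is in how the unbounded factor is removed, and there your proposal has a genuine gap. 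With the integrating factor $V(t)=\exp\big(-C\int_0^t(1+|X^{x,\alpha}|+|X^{y,\alpha}|)\,ds\big)$, passing from $\E[V|Y|^2]\le|x-y|^2$ back to $\E[|Y|^2]$ by H\"older requires a finite moment of $V^{-q}=\exp\big(qC\int_0^T(1+|X^{x,\alpha}|+|X^{y,\alpha}|)\,ds\big)$ for some $q>1$, i.e.\ an \emph{exponential} moment of the time-integrated solution. The only a priori bounds available, \eqref{e3-ex1}, are polynomial moment bounds, and no polynomial bound on $\sup_{[0,T]}|X^{x,\alpha}|$ implies $\E\exp\big(c\int_0^T|X^{x,\alpha}|\,ds\big)<\infty$. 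So ``discharging $V^{-1}$ onto the moments furnished by \eqref{e3-ex1}'' does not go through, and interpolating at two different powers does not help, since every H\"older conjugate of the weight is again an exponential of the integral. Splitting on $\{V\ge\eps\}$ and using Markov's inequality on the complement avoids the exponential moment but destroys the $|x-y|^2$ scaling --- precisely the point you flag as delicate. For this particular model one might establish exponential integrability of $\int_0^T X_i\,ds$ from the logistic-type structure, but that is a substantial extra argument not reducible to the stated hypotheses.

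The paper sidesteps the issue at far lower cost: in \eqref{e4-ex1}--\eqref{e8-ex1} it applies the Cauchy--Schwarz inequality to the time integral,
\begin{equation*}
\Big(\int_0^{T\wedge\tau}(1+|X^{x,\alpha}|+|X^{y,\alpha}|)\,|Y|\,ds\Big)^2\le\Big(\int_0^{T\wedge\tau}(1+|X^{x,\alpha}|+|X^{y,\alpha}|)^2ds\Big)\Big(\int_0^{T\wedge\tau}|Y|^2ds\Big),
\end{equation*}
so the unbounded factor enters only through its \emph{second} moment, which \eqref{e3-ex1} together with $|x|\vee|y|\le R$ bounds by a constant $K(1+T)(1+|x|^2+|y|^2)\le K_{R,T}$; this constant is pulled out in front of $\E\int_0^T\sup_{s\le t}|Y(s\wedge\tau)|^2\,dt$ and an ordinary Gronwall argument concludes. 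If you want to keep your framework, replace the exponential weight by this quadratic decoupling: it is exactly the device that converts the need for exponential moments into a need for the polynomial moments you actually have.
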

\begin{proof}
Using the elementary estimate
$|\diag(x)A(k)x-\diag(y)A(k)(y)|\leq C_A(|x|+|y|)|x-y|$
for some $C_A>0$,
we obtain
$$
\begin{aligned}
|X^{x,\alpha}&(t\wedge\tau)-X^{y,\alpha}(t\wedge\tau)|\\
\leq&
|x-y|+\sum_{i=1}^r\int_0^{t\wedge\tau} |b_i(X_i^{x,\alpha}(s)-X_i^{y,\alpha}(s))|dt
\\
&+C_A\int_0^{t\wedge\tau} \left(|(X^{x,\alpha}(s)|+|X^{y,\alpha}(s))|\right)|(X_i^{x,\alpha}(s)-X_i^{y,\alpha}(s))|ds
\\
&+\sum_{i=1}^r\left|\int_0^{t\wedge\tau} \sigma_i(\alpha(s))(X_i^{x,\alpha}(s)-X_i^{y,\alpha}(s))dW_i(s)\right|
\end{aligned}
$$
It follows from the Cauchy-Schwarz inequality that
\begin{equation}\label{e4-ex1}
\begin{aligned}
|X^{x,\alpha}&(t\wedge\tau)-X^{y,\alpha}(t\wedge\tau)|^2\\
\leq&
C|x-y|^2+C\left(\int_0^{t\wedge\tau} \left(1+|(X^{x,\alpha}(s)|+|X^{y,\alpha}(s))|\right)|(X^{x,\alpha}(s)-X^{y,\alpha}(s))|ds\right)^2
\\
&+C\sum_{i=1}^r\left|\int_0^{t\wedge\tau} \sigma_i(\alpha(s))(X_i^{x,\alpha}(s)-X_i^{y,\alpha}(s))dW_i(s)\right|^2
\end{aligned}
\end{equation}
for some constant $C>0$.
By the Burkholder-Davis-Gundy Inequality,
\begin{equation}\label{e5-ex1}\begin{aligned}
C\E\sup_{t\in[0,T]}\left\{\sum_{i=1}^r\left|\int_0^{t\wedge\tau} \sigma_i(\alpha(s))(X_i^{x,\alpha}(s)-X_i^{y,\alpha}(s))dW_i(s)\right|^2\right\}\\\ \leq \tilde C\E\int_0^{T\wedge\tau}|X^{x,\alpha}(s)-X^{y,\alpha}(s)|^2ds
\end{aligned}\end{equation}
for some constant $\tilde C$.
In view of H\"older's inequality,
\begin{equation}\label{e8-ex1}
\begin{aligned}
\E\sup_{t\in[0,T]}&\left\{C\left(\int_0^{t\wedge\tau} \left(1+|(X^{x,\alpha}(s)|+|X^{y,\alpha}(s))|\right)|(X^{x,\alpha}(s)-X^{y,\alpha}(s))|ds\right)^2\right\}\\
=&
 C\E\left(\int_0^{T\wedge\tau} \left(1+|(X^{x,\alpha}(s)|+|X^{y,\alpha}(s))|\right)|(X^{x,\alpha}(s)-X^{y,\alpha}(s))|ds\right)^2\\
\leq& C\left[\E\int_0^{T\wedge\tau}\left(1+|X^{x,\alpha}(t)|+|X^{y,\alpha}(t)|\right)^2dt\right]\E\int_0^{T\wedge\tau} |X^{x,\alpha}(t)-X^{y,\alpha}(t)|^2dt\\
\leq&K(1+T)(1+|x|^2+|y|^2)\E\int_0^{T\wedge\tau} |X^{x,\alpha}(t)-X^{y,\alpha}(t)|^2dt\,\, \text{ (due to \eqref{e3-ex1})}\\
\leq &K(1+T)(1+|x|^2+|y|^2)\E\int_0^{T} \sup_{s\in[0,t]}\left\{|X^{x,\alpha}(s\wedge\tau)-X^{y,\alpha}(s\wedge\tau)|^2\right\}dt
\end{aligned}
\end{equation}
for some $K>0$.
Taking the supreme over $[0,T]$, followed by taking the expectation on both sides of \eqref{e4-ex1}, and using \eqref{e5-ex1} and \eqref{e8-ex1}, we have
\begin{equation}\label{e6-ex1}
\begin{aligned}
\E&\sup_{t\in[0,T]}\left\{|X^{x,\alpha}(t\wedge\tau)-X^{y,\alpha}(t\wedge\tau)|^2\right\}\\
&\leq C|x-y|^2+K(1+T)(1+|x|^2+|y|^2)\E\int_0^{T} \sup_{s\in[0,t]}\left\{|X^{x,\alpha}(s\wedge\tau)-X^{y,\alpha}(s\wedge\tau)|^2\right\}dt
\end{aligned}
\end{equation}
for some constant $K>0$. Then \eqref{e9-ex1} follows from the Gronwall
inequality.
\end{proof}
Although the coefficients of \eqref{e1-ex1} are not globally Lipschitz,
the estimates \eqref{e3-ex1} and \eqref{e9-ex1} are sufficient for us  to
derive the following results.

\begin{thm}
Assume that $q_{ij}(\cdot)$, $i, j\in\M$ are bounded and continuous.
Let $(X^{x,\alpha}(t)),\alpha^{x,\alpha}(t))$ be the solution to \eqref{e1-ex1} and \eqref{tran} with initial value $(x,\alpha)\in\R^{r,\circ}_+\times\M$.
The following assertions hold:
\begin{enumerate}[{\rm 1.}]
\item $X^{x,\alpha}(t)$ is
twice continuously differentiable with respect to $x$ in probability.
If
in addition, $q_{kj}(x)$ satisfies \eqref{holder}
then $X^{x,\alpha}(t))$ is
twice continuously differentiable in $L^p$ with respect to $x$ for any $0<p<\lambda$, where $\lambda$ is the H\"older exponent in  \eqref{holder}.
\item If $q_{kj}(x)$ satisfies \eqref{qlip} then for any $R$ and $T>0$, there is a $C_{R, T}>0$ such that
 for any $x,\wdt x\in\R^{r,\circ}_+, |x|\vee|\wdt x|\leq R$, and $\al\in\M$, we have
\begin{equation*}
\E \sup\limits_{t\in[0,T]}|X^{\wdt x,\al}(t) - X^{x,\al}(t)| \le C_{R,T}|\wdt x - x|.
\end{equation*}
\item
Assume  that for each $i,j\in\M$, $q_{ij}(\cdot)\in C^2$ and $|D^\beta q_{ij}(\cdot)|$ are Lipschitz and bounded uniformly by some constant $M$ for $|\beta|\leq 2$.
Let $\phi(\cdot,i) \in C^2$ satisfy
\begin{equation*}
|D^\beta_x\phi(x,i)|\leq K(1+|x|^{\gamma}), i\in\M, |\beta|\leq 2.
\end{equation*}
Then, $u(t,x,i)=\E[\phi(X^{x,i}(t),\alpha^{x,i}(t))]$
is twice continuously differentiable with respect to the variable $x\in\R^{r,\circ}_+$.
\item The solution process $(X(t),\alpha(t))$
for the system given by \eqref{e1-ex1} and \eqref{tran}
is a Markov-Feller process.
\end{enumerate}
\end{thm}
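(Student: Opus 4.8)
The four assertions are precisely the conclusions of Theorems \ref{thm-1}, \ref{prop4.2}, \ref{thm5.2}, and \ref{Feller}, whose proofs were carried out under (global or local) Lipschitz continuity of $b$ and $\sigma$. The plan is to observe that those proofs invoke the Lipschitz hypothesis only through two analytic inputs---the uniform moment bound \eqref{egrowth} and the pre-decoupling difference estimate \eqref{e3}---and that for the competitive Lotka-Volterra system \eqref{e1-ex1} these two inputs are supplied, respectively, by the moment bound \eqref{e3-ex1} and by Lemma \ref{lm6.2}. Throughout, all processes live in the invariant open domain $\R^{r,\circ}_+$ by the first bullet, so every estimate is understood for initial data in $\R^{r,\circ}_+$. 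Once the estimate \eqref{e3} is replaced by \eqref{e9-ex1} and the moment bound \eqref{egrowth} by \eqref{e3-ex1}, each of the four proofs goes through essentially verbatim.

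For assertion 1 I would reconstruct the proof of Theorem \ref{thm-1}: set up the basic coupling $(X,\wdt X,\alpha,\wdt\alpha)$ with the coupled generator \eqref{eq:coupled-Q} and the decoupling time $\tau^\Delta$ of \eqref{eq:tau-d-def}. The convergence \eqref{e4} of $\sup_{t\le T\wedge\tau^\Delta}|X(t)-\wdt X(t)|$ to zero in probability now follows from \eqref{e9-ex1} in place of \eqref{e3}; the vanishing $\PP\{\tau^\Delta\le T\}\to 0$ (and the sharp rate $O(|\Delta|^\lambda)$ under \eqref{holder}) is obtained exactly as in \eqref{e7}--\eqref{e8}, using boundedness or H\"older continuity of $Q$ together with the moments \eqref{e3-ex1}. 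The decomposition \eqref{e10} and the passage to the limit \eqref{e12}, \eqref{e15} are unchanged. For assertion 2 I would reconstruct the proof of Theorem \ref{prop4.2}: the only model-specific ingredients there are the moment bound \eqref{e5.2} and the estimate \eqref{e3}, now supplied by \eqref{e3-ex1} and \eqref{e9-ex1}; the Dynkin computation with $U(x,\wdt x,k,l)=\1_{\{k\ne l\}}(1+|x|^2+|\wdt x|^2)$ and the H\"older bound \eqref{e5.8} rely only on these and on \eqref{qlip}. Because we now restrict to $|x|\vee|\wdt x|\le R$, the polynomial prefactor $1+|x|^{\gamma_1+2}+|\wdt x|^{\gamma_1+2}$ of \eqref{e5.1} is absorbed into a constant $C_{R,T}$, yielding the stated estimate.

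For assertion 3 I would follow the proof of Theorem \ref{thm5.2}, based on the change-of-measure representation \eqref{e4.4} and the auxiliary diffusion $Z$ of \eqref{e4.3} driven by the constant-rate chain $\chi$. The process $Z$ is a Markovian (constant-rate) switched Lotka-Volterra system, for which the moment bound, the $L^4$ difference bound \eqref{e4.4a}, and the derivative bound \eqref{e4.4c} hold by the same Burkholder-Davis-Gundy and Gronwall argument as in Lemma \ref{lm6.2}; indeed they are simpler here, since $Z$ and $\wdt Z$ share the same chain $\chi$ and hence obey the same equation, so no decoupling time is needed. With these in hand, the Taylor estimates \eqref{e4.5}--\eqref{e4.8}, the summand bound \eqref{e4.9}, the series estimate \eqref{e4.14a}, and the limit \eqref{e4.17} are unaffected. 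For assertion 4, the Feller property follows from Theorem \ref{Feller}: its proof used only the in-probability convergences \eqref{e4} and \eqref{e7}, both now available, and the regularity input \eqref{e-regular}, which holds because \eqref{e1-ex1} admits a unique global positive solution. The cutoff construction \eqref{sde-hat}--\eqref{feller-x}, with a smooth $\psi$ supported on $\{|x|\le H_R\}$, then applies verbatim on $\R^{r,\circ}_+$.

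The one genuinely new step---and hence the \emph{main obstacle}---is the passage from global Lipschitz continuity to the locally Lipschitz, polynomially growing coefficients of \eqref{e1-ex1}, which is exactly what Lemma \ref{lm6.2} and \eqref{e3-ex1} accomplish. The delicate point is that the competition term $\diag(x)A(k)x$ is only locally Lipschitz, so the naive Gronwall estimate is unavailable; one must first peel it off via the elementary bound $|\diag(x)A(k)x-\diag(y)A(k)y|\le C_A(|x|+|y|)|x-y|$ and then control the factor $|X|+|\wdt X|$ inside the H\"older and Burkholder-Davis-Gundy steps using the uniform moments \eqref{e3-ex1}, so that the Gronwall inequality closes with a constant depending only on $R$ and $T$. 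Once this substitution is secured, the remainder of each argument is insensitive to whether the moment and difference estimates originate from global Lipschitz continuity or from \eqref{e3-ex1} and Lemma \ref{lm6.2}, and the four assertions follow.
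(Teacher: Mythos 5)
Your proposal is correct and follows essentially the same route as the paper: the paper's own proof likewise reduces all four assertions to Theorems \ref{thm-1}, \ref{prop4.2}, \ref{thm5.2}, and \ref{Feller}, observing that the global-Lipschitz inputs \eqref{e3} and \eqref{egrowth} can be replaced by the $R$-dependent estimate \eqref{e9-ex1} of Lemma \ref{lm6.2} and the moment bound \eqref{e3-ex1}, with the only visible change being the substitution of $C_{R,T}$ for $C_T$ in assertion 2. Your write-up simply spells out in more detail what the paper states in a few lines.
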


\begin{proof}
In the proof of Theorem \ref{thm-1},
we use the global Lipschitz to obtain \eqref{e3}.
In this example,
the constant $K$, depending only on $T$, in \eqref{e3} is replaced by $K_{R,T}$,
which depends on both $R$ and $T$ (see \eqref{e9-ex1}).
Although \eqref{e9-ex1} is
slightly weaker than \eqref{e3},
it is still sufficient to
follow the proofs of Theorems \ref{thm-1} and
\ref{thm5.2}
to obtain the first and third claims of Theorem 6.3, respectively.
The second claim is derived from Theorem \ref{prop4.2}
with the
minor modification that the constant $C_T$ in \eqref{e5.1}
is replaced by $C_{R,T}$ because the constant $K_{R,T}$
in \eqref{e9-ex1} depends on $R$.
The forth claim follows directly from Theorem \ref{Feller}.
\end{proof}

\end{exm}

\end{document}